\documentclass[12pt]{amsart}
\usepackage{amssymb}
\usepackage[all]{xypic}

% REMINDER for myself:  The following characters are ``special''
%
%         #  $  %  &  _  {  }  ~  ^  \
%
% So remember to escape them
%
% ***************************************************************************
% Page style and formatting options
% ***************************************************************************

                % This is just to make the margins
                % look nice for 8.5x11 paper
                % 1 inch border on left, right, and top
                % and 1 inch between bottom of text and
                % bottom of paper, but .5 inch between
                % bottom of footer and bottom of paper.
\setlength{\textwidth}{6.5in}
\setlength{\oddsidemargin}{0in}
\setlength{\evensidemargin}{0in}
\setlength{\topmargin}{0in}
\setlength{\headheight}{0.50in}
\setlength{\headsep}{0.50in}
\setlength{\textheight}{8in}
\setlength{\footskip}{0.5in}
\setlength{\topskip}{0in}   % No blank space on first page

% ***************************************************************************
% Counter settings
% ***************************************************************************

\renewcommand{\labelenumi}{(\alph{enumi})}

% ***************************************************************************
% Setup for my ``Theorem'' environments (these are the standard AMS ones)
% ***************************************************************************
\theoremstyle{plain}
\newtheorem{theorem}{Theorem}[section]
\newtheorem{lemma}[theorem]{Lemma}
\newtheorem{corollary}[theorem]{Corollary}
\newtheorem{prop}[theorem]{Proposition}
\newtheorem{conj}[theorem]{Conjecture}

\theoremstyle{remark}

\newtheorem{remark}[theorem]{Remark}

\newtheorem*{note*}{Note}
\newtheorem*{remark*}{Remark}
\newtheorem*{example*}{Example}

\theoremstyle{definition}
\newtheorem*{definition*}{Definition}
\newtheorem{definition}[theorem]{Definition}

% ***************************************************************************
% Below are some of my own command definitions which I use to make
% the TeX more readable (or easier to type).
% ***************************************************************************

\newcommand{\Z}{\mathbb{Z}}
\newcommand{\R}{\mathbb{R}}
\newcommand{\Q}{\mathbb{Q}}
\newcommand{\C}{\mathbb{C}}

\newcommand{\Gal}{\mathrm{Gal}}

\newcommand{\Norm}{\mathrm{Norm}}
\newcommand{\cl}{\mathrm{cl}}

\newcommand{\tr}{\mathrm{Tr}}

\newcommand{\Frakp}{\mathfrak{P}}

\newcommand{\frakp}{\mathfrak{p}}

\newcommand{\Ann}{\mathrm{Ann}}

\newcommand{\euler}{\chi_{\mathrm{ref}}}
\newcommand{\nr}{\mathrm{nr}}
\newcommand{\onr}{\overline{\mathrm{nr}}}
\newcommand{\Mat}{\mathrm{Mat}}
\newcommand{\fit}{\mathrm{Fit}_{\mathcal{O}}}
\newcommand{\ind}{\mathrm{ind}_{G_{\mathfrak{p}}}^{G}}
\newcommand{\hg}{h_{\mathrm{glob}}}
\newcommand{\hgi}{h^{-1}_{\mathrm{glob}}}
\newcommand{\pr}{\mathrm{pr}}
\newcommand{\im}{\mathrm{im}}

\renewcommand{\det}{\mathrm{det}}

\title[A non-abelian Stickelberger theorem]{A non-abelian Stickelberger theorem}

\author{David Burns}
\address{David Burns\\
King's College London\\
Dept.\ of Mathematics\\
London WC2R 2LS\\
U.K.
}
\email{david.burns@kcl.ac.uk}
\urladdr{http://www.mth.kcl.ac.uk/staff/d\_burns.html}

\author{Henri Johnston}
\address{Henri Johnston\\
St.\ John's College\\
St.\ John's Street\\
Cambridge CB2 1TP\\
U.K.
}
\email{H.Johnston@dpmms.cam.ac.uk}
\urladdr{http://www.dpmms.cam.ac.uk/$\sim$hlj31/}

%\thanks{Thanks here}
\subjclass[2000]{11R29, 11R33, 11R42} \keywords{equivariant
$L$-values, class groups}
\date{Version of 10th March 2010}

\begin{document}

\begin{abstract}
Let $L/k$ be a finite Galois extension of number fields with Galois group $G$.
For every odd prime $p$ satisfying certain mild technical hypotheses, we
use values of Artin $L$-functions to construct an element in the centre of the
group ring $\Z_{(p)}[G]$ that annihilates the $p$-part of the class group of $L$.
\end{abstract}

\maketitle

\section{Introduction and statement of the main results}\label{intro}

Let $K/k$ be a finite Galois extension of number fields with Galois group $G$.
Let $\mathcal{S}$ be a finite set of places of $k$ containing the infinite places $\mathcal{S}_{\infty}$.
For any (complex) character $\chi$ of $G$, we let
$e_{\chi} = \frac{\chi(1)}{|G|}\sum_{g \in G} \chi(g^{-1})g$ denote the corresponding central idempotent
of the group algebra $\C[G]$ and let $L_{\mathcal{S}}(s, \chi)$ denote the truncated
Artin $L$-function attached to $\chi$ and $\mathcal{S}$. Summing over all irreducible characters of $G$ gives a so-called `Stickelberger element'
\[
\Theta(K/k, \mathcal{S}) :=
\sum_{\chi \in \mathrm{Irr}(G)} L_{\mathcal{S}}(0, \bar{\chi}) \cdot e_{\chi}.
\]

Now suppose that $k$ is totally real, $K$ is a CM field, $G$ is abelian and
$\mathcal{S}$ contains the ramified places
$\mathcal{S}_{\mathrm{ram}}(K/k)$.
Let $\mu_{K}$ denote the roots of unity in $K$ and let $\cl_{K}$ denote the
class group of $K$.
In \cite{MR524276} and \cite{MR579702}
independently it was shown that
\[
\Ann_{\Z[G]}(\mu_{K}) \Theta(K/k, \mathcal{S}) \subseteq \Z[G].
\]
It is now easy to state Brumer's conjecture, which can be seen as a generalisation of Stickelberger's theorem.

\begin{conj}\label{conj:brumer}
In the above situation, $\Ann_{\Z[G]}(\mu_{K}) \Theta(K/k, \mathcal{S})$ annihilates $\cl_{K}$.
\end{conj}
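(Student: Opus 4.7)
The plan is to attack Brumer's conjecture via an Iwasawa-theoretic descent argument, starting from the classical main conjecture for abelian extensions of totally real fields. Since $\cl_{K}$ is finite, it suffices to prove the annihilation statement one prime $p$ at a time. For odd $p$, complex conjugation $j \in G$ splits $\cl_{K} \tensor \Z_{p}$ into $\pm$-eigenspaces; because $k$ is totally real, functional equation considerations force $\Theta(K/k,\mathcal{S})$ to lie in the minus component, and the plus part of the claim is absorbed automatically by any annihilator of $\mu_{K}$ (since $\mu_{K}^+$ controls the plus side). Thus the real task is to show that $\Ann_{\Z[G]}(\mu_{K}) \cdot \Theta$ annihilates the minus part $(\cl_{K} \tensor \Z_{p})^{-}$ for each odd $p$; the case $p=2$ would be handled separately (and is the source of the most persistent difficulties).

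Next I would pass to the cyclotomic $\Z_{p}$-extension $K_{\infty}/K$ and, using norm-compatibility of Stickelberger elements across finite layers, interpolate them to an element $\Theta_{\infty}$ in the Iwasawa algebra $\Lambda = \Z_{p}[[\Gal(K_{\infty}/k)]]$. By the theorem of Deligne--Ribet, the $\chi$-components of $\Theta_{\infty}$ recover $p$-adic $L$-values, and by the main conjecture of Iwasawa theory (proved by Wiles for arbitrary totally real $k$), $\Theta_{\infty}$ generates the characteristic ideal of the minus-part Iwasawa module $X_{\infty}^{-} = \Gal(M_{\infty}/K_{\infty})^{-}$, where $M_{\infty}$ is the maximal abelian pro-$p$ extension of $K_{\infty}$ unramified outside $\mathcal{S}$. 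Because $X_{\infty}^{-}$ has no non-trivial finite $\Lambda$-submodule (away from controlled exceptional characters), membership of $\Theta_{\infty}$ in the characteristic ideal upgrades to genuine annihilation of $X_{\infty}^{-}$.

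The final step is to descend from $X_{\infty}^{-}$ to $(\cl_{K} \tensor \Z_p)^{-}$ via the natural surjection furnished by class field theory. \emph{This descent step is the main obstacle.} The cokernel of $(X_{\infty}^{-})_{\Gal(K_{\infty}/K)} \to (\cl_{K} \tensor \Z_p)^{-}$ involves genus-theoretic contributions from inertia at ramified primes in $\mathcal{S}$, and the kernel involves cohomology of $\mu_{p^{\infty}}$ at intermediate layers; these errors are not killed by $\Theta$ alone. This is precisely why the conjecture involves $\Ann_{\Z[G]}(\mu_{K})$ rather than asserting that $\Theta$ itself annihilates $\cl_{K}$: one expects that an element of $\Ann(\mu_{K})$ absorbs the discrepancy introduced by descent. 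Beyond this, the remaining serious difficulties --- and the reasons the conjecture is still open in general --- are the handling of characters at which $\Theta_{\infty}$ has a trivial zero, the delicate question of converting the characteristic ideal into the Fitting ideal when $X_{\infty}^{-}$ fails to be cyclic over $\Lambda$, and of course the $p=2$ case, where both the main conjecture and the minus-part decomposition require substantially more care.
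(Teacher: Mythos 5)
The statement you are trying to prove is recorded in the paper as a \emph{conjecture} (Brumer's conjecture), and the paper does not prove it: the introduction explicitly notes that it remains open even in the abelian case and, for non-abelian extensions, that no Brumer-type annihilation result was previously known. What the paper actually establishes is the weaker, unconditional Theorem \ref{main-theorem} (and Corollary \ref{cor:weak-brumer}), which in the abelian CM setting recovers only the $p$-part of Brumer's conjecture multiplied by an extra factor of $|G|$, and only for odd $p$ satisfying the technical hypothesis of the theorem. So there is no paper proof to compare yours against, and a genuine proof of the conjecture would be a major new theorem, not a routine exercise.

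Your write-up is an honest survey of the standard Iwasawa-theoretic attack (Deligne--Ribet interpolation of Stickelberger elements, Wiles' main conjecture over totally real fields, descent from $X_{\infty}^{-}$ to $(\cl_{K} \otimes \Z_{p})^{-}$), and you correctly identify where it breaks: the descent discrepancy, trivial zeros, the gap between characteristic and Fitting ideals when $X_{\infty}^{-}$ is not $\Lambda$-cyclic, and $p=2$. But by your own account this means you have not proved the statement --- you conclude by explaining why ``the conjecture is still open in general.'' Beyond that, some intermediate claims are stated more strongly than is known: membership of $\Theta_{\infty}$ in the characteristic ideal of $X_{\infty}^{-}$ does not upgrade to annihilation of $X_{\infty}^{-}$ merely because $X_{\infty}^{-}$ has no nonzero finite $\Lambda$-submodule; one needs a Fitting-ideal or cyclicity argument, which is exactly the obstruction you later flag. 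And the plus part is not ``absorbed automatically by any annihilator of $\mu_{K}$''; what one actually uses is that $\Theta$ is killed by $e_{+}$ up to the trivial character, which requires care when $\mathcal{S}$ is allowed to vary. In short, your proposal is a reasonable description of why the conjecture is hard, not a proof; the paper deliberately sidesteps these difficulties by proving a weaker annihilation statement via the ETNC/refined-Euler-characteristic formalism and the Strong Stark Conjecture at $p$, rather than attempting Brumer's conjecture itself.
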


There is a large body of evidence in support of Brumer's conjecture;
see the expository article \cite{MR2088712}, for example.
Furthermore, under the assumptions that the appropriate special case of the equivariant Tamagawa number conjecture (ETNC) holds (see \S \ref{sec:tate-ETNC}) and the non-$2$-part of $\mu_{K}$ is a cohomologically trivial $G$-module, Greither has shown that Brumer's conjecture holds outside the $2$-part (see \cite{MR2371374}).

By contrast, as far as we are aware, there is still no Brumer-type annihilation result proved for any non-abelian extension. In the present article, we address this situation by proving an unconditional annihilation result for arbitrary (not necessarily abelian) extensions, from which a weak form of Brumer's conjecture can also be deduced.

Before stating the main result, we introduce some additional notation.
For any natural number $n$, we let $\zeta_{n}$ denote a primitive $n$th root of unity.
For any number field $F$, we write $F^{\mathrm{cl}}$
for the normal closure and $F^{+}$ for the maximal totally real subfield of $F$.
For a complex character $\chi$ of a finite group $G$, we let
$E=E_{\chi}$ denote a subfield of $\C$ over which $\chi$ can be realised
that is both Galois and of finite degree over $\Q$, and let $\mathcal{O}=\mathcal{O}_{E}$ denote the
ring of algebraic integers of $E$. Furthermore, we write $\pr_{\chi}$ for the associated `projector'
$\sum_{g \in G} \chi(g^{-1}) g$ in the group algebra $E[G]$ and
$\mathcal{D}_{E/\Q}$ for the different of the extension $E/\Q$.

We shall postpone two important definitions until \S \ref{precise}; for the moment we shall
only give brief descriptions. In the theorem below, $U_{\chi}$ is an explicit fractional ideal of
$\mathcal{O}$ depending on $\mathcal{S}_{\mathrm{ram}}(K/k)$ (it is often the case that
$U_{\chi}$ is trivial; see Remark \ref{rmk:simplifying-thm}(ii) and \S \ref{sec:U-chi}).
The fractional $\mathcal{O}$-ideal $h(\mu_{K},\chi)$ is a natural truncated Euler characteristic 
of the $\chi$-twist of $\mu_{K}$. We abbreviate $L_{\mathcal{S}_{\infty}}(s, \chi)$ to $L(s, \chi)$.

\begin{theorem}\label{main-theorem}
Let $L/k$ be a finite Galois extension of number fields with Galois group $G$.
Fix a non-trivial irreducible character $\chi$ of $G$.
Let $K:=L^{\ker(\chi)}$ be the subfield of $L$ cut out by $\chi$.
Let $p$ be any odd prime satisfying the following condition:
\begin{itemize}
\item[(\textasteriskcentered)] If
\emph{(a)} $k$ is totally real,
\emph{(b)} $K$ is a CM field, and
\emph{(c)} $K^{\mathrm{cl}} \subset (K^{\mathrm{cl}})^{+}(\zeta_{p})$, \\
then no prime of $K^{+}$ above $p$ is split in $K/K^{+}$.
\end{itemize}
Then for any element $x$ of $\mathcal{D}_{E/\Q}^{-1} \cdot h(\mu_{K},\chi) \cdot U_{\chi}$,
the sum
\[
\sum_{\omega \in \Gal(E_{\chi}/\Q)} \!\!\! x^{\omega} L(0, \bar{\chi}^{\omega})
\cdot \pr_{\chi^{\omega} }
\]
belongs to the centre of $\Z_{(p)}[G]$ and annihilates $\Z_{(p)}  \otimes_{\Z} \cl_L$.
\end{theorem}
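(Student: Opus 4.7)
The statement splits naturally into a centrality/integrality claim and an annihilation claim. For the former, the sum over $\omega \in \Gal(E_{\chi}/\Q)$ is designed to be Galois-stable, so the element lies in the Wedderburn component of $Z(\Q[G])$ attached to the Galois orbit of $\chi$. Upgrading this to $\Z_{(p)}$-integrality is classical integral representation theory: a Galois-trace of the form $\sum_{\omega} c^{\omega} e_{\chi^{\omega}}$ has $\Z_{(p)}$-integral group-element coordinates precisely when $c$ lies in $\mathcal{D}_{E/\Q}^{-1}$ scaled by the appropriate denominator ideal coming from the Wedderburn decomposition (this is what forces the appearance of the codifferent). By Siegel--Klingen one has $L(0,\bar{\chi}) \in E_{\chi}$, and the hypothesis $x \in \mathcal{D}_{E/\Q}^{-1} \cdot h(\mu_{K},\chi) \cdot U_{\chi}$ is calibrated so that, after absorbing the ramified-Euler-factor contribution into $U_{\chi}$ and the remaining $p$-adic denominator of the $L$-value into the inverse of the truncated Euler characteristic $h(\mu_{K},\chi)$, every $p$-adic denominator is cleared.

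For the annihilation of $\Z_{(p)} \otimes_{\Z} \cl_{L}$ I would aim to reduce to the (unconditionally known) abelian Brumer--Stickelberger theory. Since $\chi$ is inflated from a faithful irreducible character $\bar{\chi}$ of $\Gal(K/k)$, the $\chi$-isotypic component of $\cl_{L}$ is controlled, via inflation and norm, by the $\bar{\chi}$-isotypic component of $\cl_{K}$, so the problem descends to $K$. To descend further to an abelian situation, I would invoke Brauer induction to write $\bar{\chi}$ as a $\Z$-combination of characters of the form $\mathrm{Ind}_{H}^{\Gal(K/k)} \psi$ with $\psi$ a linear character of an elementary subgroup $H$. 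Each such $\psi$ cuts out an abelian subextension to which the unconditional abelian Brumer annihilation applies outside the $2$-part, coming from the combination of Deligne--Ribet integrality with Wiles's main conjecture of Iwasawa theory for totally real fields, as deployed by Greither in \cite{MR2371374}. Hypothesis (\textasteriskcentered) is tailored precisely to exclude the exceptional CM configuration in which this abelian input would fail. Compatibility of Artin $L$-values with induction then permits the abelian annihilators to be reassembled into an annihilator of the $\chi$-component, which can finally be lifted back to $\cl_{L}$.

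The main obstacle will be the integral bookkeeping in this reassembly step. Brauer's decomposition is only a virtual $\Z$-combination, so induced annihilators compose a priori only over rational or suitably localised group rings, and producing an annihilator defined over $\Z_{(p)}$ requires compensating for the integrality defect of Brauer induction. I expect the precise role of $U_{\chi}$ and the exact shape of $h(\mu_{K},\chi)$, to be fixed in \S \ref{precise}, to emerge at exactly this point, as the correction absorbing both the Brauer-induction defect and the discrepancy between the truncated $L$-value and the corresponding $\mathcal{S}_{\mathrm{ram}}(K/k)$-truncation that enters naturally in the abelian step. A secondary technical point is that the descent $\cl_{L} \leadsto \cl_{K}$ via inflation/norm introduces cokernels supported on the $\ker(\chi)$-invariants; the hypotheses that $p$ is odd and that condition (\textasteriskcentered) holds are what keep these cokernels from obstructing $\Z_{(p)}$-integrality at the $\chi$-component.
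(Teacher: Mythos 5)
Your reduction $\cl_L \leadsto \cl_K$ via inflation and norm, the observation that the nontrivial cases force $k$ totally real and $K$ CM, and the role of $\mathcal{D}_{E/\Q}^{-1}$ in descending an $\mathcal{O}[G]$-annihilator to a $\Z[G]$-annihilator via the trace (this is exactly Lemma \ref{useful2}) all match the paper. But the core of your annihilation argument does not: the paper makes no use whatsoever of Brauer induction or of a reduction to abelian subextensions. Instead it works directly at the $\chi$-component: the ETNC formalism (via Ritter--Weiss Tate sequences for small $S$ and Greither's metrised diagrams, Propositions \ref{prop:X-eqn} and \ref{prop:explicit-eqn}) produces an identity in $K_0(R,\R[G]^-)$ relating $\hat\partial(L(0)^\#\nr_{\R[G]}(\hg))^-$ to a refined Euler characteristic; applying the base-change functor $\mu_\chi$ to $K_0(\mathcal{O},\C)$, using that this group is torsion-free, and invoking only the Strong Stark Conjecture at $p$ (valid under $(\textasteriskcentered)$ by Wiles), one lands on an equality of fractional $\mathcal{O}$-ideals via Lemma \ref{lemma:euler-to-fit}; this is then combined with the Fitting-ideal computation of Proposition \ref{prop:fit-comp} and fed into Lemma \ref{useful}.

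The Brauer induction route you propose has a genuine gap that you correctly sense but do not resolve. Brauer's theorem gives $\chi$ as a $\Z$-linear combination of monomial characters with coefficients that are in general both positive and negative. Annihilation is a one-sided, non-linear notion: if $a_i$ annihilates the $\psi_i$-component for each $i$ and $\chi = \sum n_i \mathrm{Ind}\,\psi_i$, there is no way to combine the $a_i$ into an element annihilating the $\chi$-component when some $n_i < 0$ --- one would effectively need to \emph{divide} by an annihilator, which has no meaning. This is not a bookkeeping issue absorbed by $U_\chi$ or $h(\mu_K,\chi)$; it is a structural obstruction, and it is precisely why the paper avoids Brauer induction in favour of the Euler-characteristic/Fitting-ideal machinery. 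Relatedly, your guesses about $U_\chi$, $h(\mu_K,\chi)$ and $(\textasteriskcentered)$ are off: $U_\chi$ records the reduced norms of the local modules $e_\chi\mathfrak{M}U_\frakp$ attached to ramified primes, $h(\mu_K,\chi)$ is the alternating product of Fitting ideals of $H^i(G,\mu_K[\chi])$, and $(\textasteriskcentered)$ exists to guarantee the Strong Stark Conjecture at $p$ for $\chi$ (i.e.\ ``ETNC modulo torsion''), which is the precise conditional input the argument feeds into the torsion-free group $K_0(\mathcal{O},\C)$ --- none of them compensate for a Brauer-induction defect.
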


\begin{remark}\label{rmk:simplifying-thm}
The statement of Theorem \ref{main-theorem} can be simplified in several cases:
\renewcommand{\labelenumi}{(\roman{enumi})}
\begin{enumerate}
\item If an odd prime $p$ is unramified in $K/\Q$ then (b) forces
$K^{\mathrm{cl}} \not \subset (K^{\mathrm{cl}})^{+}(\zeta_{p})$, so condition
(\textasteriskcentered) holds trivially. Furthermore, if $k$ is normal over
$\Q$ and $[(K^{\mathrm{cl}})^{+}:\Q]$ is odd then condition (\textasteriskcentered)
holds for all odd primes $p$ (these hypotheses together with (a), (b) and (c)
imply that all the primes of $K^{+}$ above $p$ are in fact ramified in $K/K^{+}$).

\item If every inertia subgroup of $\Gal(K/k)$ is normal
(for example, $\chi$ is linear or every
$\frakp \in \mathcal{S}_{\mathrm{ram}}(K/k)$ is non-split in $K/k$),
then under the assumption that $\chi$ is non-trivial and irreducible
it is straightforward to show that $U_{\chi}$ is trivial (see \S \ref{sec:U-chi}).

\item If an odd prime $p$ does not divide $|\mu_{K}|$ then $h(\mu_{K},\chi)$ is
relatively prime to $p$, and so this term can be ignored. In particular, this is the case if
$p$ is unramified in $K/\Q$.
\end{enumerate}
\end{remark}

\begin{remark}
The purpose of condition (\textasteriskcentered) is to ensure that when (a) and (b) hold, the Strong Stark Conjecture at $p$ as formulated by Chinburg in \cite[Conjecture 2.2]{MR724009} holds for the (odd) character $\chi$.
Hence condition (\textasteriskcentered) can be ignored completely in each of the following
cases in which the Strong Stark Conjecture is already known to be valid:
\renewcommand{\labelenumi}{(\roman{enumi})}
\begin{enumerate}
\item $\chi$ is rational valued: this was proved by Tate in \cite[Chapter II, Theorem 8.6]{MR782485};
\item $k=\Q$ and $\chi$ is linear: this was proved by Ritter and Weiss in \cite{MR1423032}
(in fact they show that the conjecture holds if $2$ is unramified in $K/\Q$ or holds outside the $2$-part otherwise, but this is all we need as $p$ is odd);
\item $k$ is an imaginary quadratic field of class number one and $\chi$ is a linear
character whose order is divisible only by primes which split completely in $k / \Q$: this
follows from (\cite[\S 3]{MR1981031} and) the result of Bley in \cite[Theorem 4.2]{MR2226270}.
\end{enumerate}
Note that in particular, we are in case (i) if $G$ is isomorphic to
the symmetric group on any number of elements,
the quaternion group of order $8$,
or any direct product of such groups.
\end{remark}

We give the proof of the following corollary in \S \ref{sec:main-results} after the proof of Theorem \ref{main-theorem}.

\begin{corollary}\label{cor:weak-brumer}
Let $L/k$ be a finite Galois extension of number fields with Galois group $G$.
Suppose that every inertia subgroup is normal in $G$ (for example,
every $\frakp \in \mathcal{S}_{\mathrm{ram}}(L/k)$ is non-split in $L/k$.)
Let $\mathcal{S}$ be any finite set of places of $k$ containing the infinite places 
$\mathcal{S}_{\infty}$. 
For any irreducible character $\chi$ of $G$, let $\Q(\chi)$ denote the character field of $\chi$
and let $d_{\chi}$ be the minimum of $[E_{\chi}:\Q(\chi)]$ over all possible choices of $E_{\chi}$. Let p be any odd prime that is unramified in $L/\Q$.
Then the element
\[
\sum_{\chi \in \mathrm{Irr}(G), \chi \neq 1} \!\!\!
L_{\mathcal{S}}(0, \bar{\chi}) \cdot d_{\chi} \pr_{\chi}
\]
belongs to the centre of $\Z_{(p)}[G]$ and annihilates $\Z_{(p)} \otimes_{\Z} \cl_L$.
\end{corollary}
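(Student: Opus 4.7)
The plan is to apply Theorem \ref{main-theorem} one Galois orbit of characters at a time and then sum the orbit contributions. Fix a non-trivial irreducible character $\chi$ of $G$, set $K := L^{\ker(\chi)}$, and choose $E = E_\chi$ realising the minimum $d_\chi = [E:\Q(\chi)]$. All three parts of Remark \ref{rmk:simplifying-thm} apply in this setting: part (i) delivers condition (\textasteriskcentered) at $p$, because $p$ is unramified in $K \sseq L$; part (ii) yields $U_\chi = \mathcal{O}_E$, once one observes that the quotient map $G \twoheadrightarrow \Gal(K/k)$ sends (normal) inertia subgroups to (normal) inertia subgroups; and part (iii) yields that $h(\mu_K,\chi)$ is coprime to $p$, since $\zeta_p \notin K$ (otherwise $p$ would ramify in $K/\Q$). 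Consequently $\mathcal{D}_{E/\Q}^{-1} \cdot h(\mu_K,\chi) \cdot U_\chi$ coincides with $\mathcal{D}_{E/\Q}^{-1}$ after tensoring with $\Z_{(p)}$, and Theorem \ref{main-theorem}, extended $\Z_{(p)}$-linearly, provides an annihilator for every $x \in \mathcal{D}_{E/\Q}^{-1} \otimes_\Z \Z_{(p)}$.

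Next, write $L_{\mathcal{S}}(0,\bar{\chi}) = f_\chi \cdot L(0,\bar{\chi})$ where
\[
f_\chi := \prod_{\frakp \in \mathcal{S} \setminus \mathcal{S}_\infty} \det\bigl(1 - \bar{\chi}(\mathrm{Frob}_\frakp) \mid V_{\bar{\chi}}^{I_\frakp}\bigr) \in \mathcal{O}_{\Q(\chi)},
\]
the factor $f_\chi$ being an algebraic integer because Frobenius acts on $V^{I_\frakp}$ with roots of unity as eigenvalues. Since $\tr_{E/\Q(\chi)}(\mathcal{D}_{E/\Q}^{-1}) = \mathcal{D}_{\Q(\chi)/\Q}^{-1} \supseteq \mathcal{O}_{\Q(\chi)}$, I would pick $x_\chi \in \mathcal{D}_{E/\Q}^{-1} \otimes_\Z \Z_{(p)}$ with $\tr_{E/\Q(\chi)}(x_\chi) = d_\chi f_\chi$. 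Partitioning $\Gal(E/\Q)$ into cosets of $\mathrm{Stab}(\chi) = \Gal(E/\Q(\chi))$, one per $\psi$ in the Galois orbit of $\chi$ with coset representative $\omega_\psi$ satisfying $\chi^{\omega_\psi} = \psi$, a direct calculation shows that the coefficient of $\pr_\psi$ in the sum of Theorem \ref{main-theorem} is
\[
\tr_{E/\Q(\chi)}(x_\chi)^{\omega_\psi} \, L(0,\bar{\psi}) \;=\; d_\chi \, f_\chi^{\omega_\psi} \, L(0,\bar{\psi}) \;=\; d_\chi \, f_\psi \, L(0,\bar{\psi}) \;=\; d_\chi \, L_{\mathcal{S}}(0,\bar{\psi}),
\]
using $f_\chi^{\omega_\psi} = f_\psi$, which is immediate from the Galois equivariance of the Euler factors defining $f_\chi$. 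Hence the orbit of $\chi$ contributes exactly $\sum_{\psi} d_\chi L_{\mathcal{S}}(0,\bar{\psi}) \pr_\psi$ (sum over the orbit) to the element of the corollary, and this partial sum lies in the centre of $\Z_{(p)}[G]$ and annihilates $\Z_{(p)} \otimes_\Z \cl_L$.

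Finally, summing over the Galois orbits of non-trivial irreducible characters of $G$ assembles precisely the element written in the corollary, and both centrality in $\Z_{(p)}[G]$ and annihilation of $\Z_{(p)} \otimes_\Z \cl_L$ are preserved by addition. The only mildly delicate step is arranging $x_\chi$ simultaneously to lie in the correct fractional ideal and to realise the prescribed trace value $d_\chi f_\chi$; both are handled by passing to $\Z_{(p)}$-coefficients, where the fractional ideal becomes $\mathcal{D}_{E/\Q}^{-1} \otimes \Z_{(p)}$ and the trace image still contains $\mathcal{O}_{\Q(\chi)} \ni d_\chi f_\chi$. Beyond this, I do not anticipate any substantial further obstacles.
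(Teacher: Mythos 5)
Your proof is correct and its overall scaffolding matches the paper's: you reduce to a fixed nontrivial $\chi$, pass to $K = L^{\ker\chi}$, invoke all three parts of Remark~\ref{rmk:simplifying-thm} (condition (\textasteriskcentered) holds because $p$ is unramified in $K/\Q$; $U_\chi = \mathcal{O}$ by (ii) via the image-of-inertia observation; $h(\mu_K,\chi)$ is coprime to $p$ by (iii)), apply the theorem per Galois orbit, and sum. The genuinely different move is how you handle $\mathcal{S} \supsetneqq \mathcal{S}_\infty$. The paper applies Theorem~\ref{main-theorem} with $x = 1$ to obtain the statement with $L(0,\bar\chi)$ (so coefficient $d_\chi = \tr_{E/\Q(\chi)}(1)$), and then separately reopens the \emph{proof} of the theorem to note that the key containment \eqref{eqn:L-annihilates} remains valid when $L(0,\chi)$ is replaced by $L_{\mathcal{S}}(0,\chi)$, since each inverse local Euler factor at $s=0$ is an algebraic integer. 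You instead keep the theorem entirely as a black box: you absorb the Euler factor $f_\chi = L_{\mathcal S}(0,\bar\chi)/L(0,\bar\chi) \in \mathcal{O}_{\Q(\chi)}$ into the free parameter $x$ by choosing $x_\chi$ with $\tr_{E/\Q(\chi)}(x_\chi) = d_\chi f_\chi$ (possible because $\tr_{E/\Q(\chi)}(\mathcal{D}_{E/\Q}^{-1}) = \mathcal{D}_{\Q(\chi)/\Q}^{-1} \supseteq \mathcal{O}_{\Q(\chi)}$), and the Galois equivariance $f_\chi^{\omega_\psi} = f_\psi$ then produces the $\mathcal S$-truncated coefficients directly. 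Your route is slicker in that it needs no re-examination of the theorem's proof; the paper's route is slightly more economical in that it takes $x=1$ and never constructs $x_\chi$. Both ultimately rest on the same arithmetic input — integrality of the Euler factors at $s=0$ — and both are correct.
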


\begin{remark}
Note that $E_{\chi}$ can always be taken to be $\Q(\zeta_{n})$ where $n$ is the exponent of $G$
(see \cite[(15.18)]{MR632548}),
and so $[\Q(\zeta_{n}):\Q(\chi)]$ is an upper bound for $d_{\chi}$. In fact, $d_{\chi}=1$ whenever $G$ is abelian or of odd prime power order, or is isomorphic to the symmetric group on any number of letters, the dihedral group of any order, or any direct product of such groups.
\end{remark}

\begin{remark}
Suppose that $k$ is totally real, $L$ is a CM field, $G$ is abelian and
$\mathcal{S}$ contains the ramified primes $\mathcal{S}_{\mathrm{ram}}(L/k)$.
Then Corollary \ref{cor:weak-brumer} says that for $p$ odd and unramified in $L/\Q$,
\[
\sum_{\chi \in \mathrm{Irr}(G), \chi \neq 1} \!\!\!
L_{\mathcal{S}}(0, \bar{\chi}) \cdot \pr_{\chi}
= |G| \cdot \Theta(L/k, \mathcal{S})
\quad \textrm{ annihilates } \quad
\Z_{(p)} \otimes_{\Z} \cl_{K}.
\]
(Note that there is a slight adjustment to be made in the case $k=\Q$.)
Under the hypotheses on $p$ we have $\Ann_{\Z[G]}(\mu_{L}) \otimes_{\Z} \Z_{(p)} = \Z_{(p)}[G]$
and so the above is the same statement as the `$p$-part' of Brumer's conjecture
(Conjecture \ref{conj:brumer}) but with an extra factor of $|G|$ in the annihilator
(of course, this makes no difference if $p$ does not divide $|G|$).
\end{remark}

\begin{remark}
In \cite{DerLfncs} a conjecture is given which, in the setting of present article, 
uses values of derivatives of Artin $L$-functions to construct explicit annihilators of ideal class groups.
Upon restriction to the abelian case and to consideration of values of Artin $L$-functions, the central conjecture of \cite{DerLfncs} precisely recovers Brumer's conjecture. \cite{DerLfncs} also studies explicit examples which show that, in some cases, Theorem \ref{main-theorem} is essentially the strongest possible annihilation result.
\end{remark}

\section{Definition of $U_{\chi}$ and $h(\mu_{K},\chi)$}\label{precise}

In this section we give the necessary background material to make precise the definitions of
$U_{\chi}$ and $h(\mu_{K},\chi)$ of Theorem \ref{main-theorem}.

\subsection{$\chi$-twists}

We largely follow the exposition of \cite[\S 1]{MR2443986}.
Fix a finite group $G$ and an irreducible (complex) character $\chi$ of $G$.
Let $E=E_{\chi}$ be a subfield of $\C$ over which $\chi$ can be realised
that is both Galois and of finite degree over $\Q$.
We write $\mathcal{O}$ for the ring of algebraic integers in $E$ and set
\[
e_{\chi} := \frac{\chi(1)}{|G|}\sum_{g \in G}\chi(g^{-1})g = 
\frac{\chi(1)}{|G|}\sum_{g \in G}\overline{\chi}(g)g, \qquad
\pr_{\chi} :=  \frac{|G|}{\chi(1)} e_{\chi} = \sum_{g \in G}\chi(g^{-1})g
= \sum_{g \in G}\overline{\chi}(g)g.
\]
Here $e_{\chi}$ is a primitive central idempotent of $E[G]$ and $\pr_{\chi}$
is the associated `projector'. 

We choose a maximal $\mathcal{O}$-order $\mathfrak{M}$ in $E[G]$ containing $\mathcal{O}[G]$ and
fix an indecomposable idempotent $f_{\chi}$ of $e_{\chi}\mathfrak{M}$.
We define an $\mathcal{O}$-torsion-free right $\mathcal{O}[G]$-module
by setting $T_{\chi} := f_{\chi}\mathfrak{M}$.
(Note that this is slightly different from the definition given in \cite[\S 1]{MR2443986}.)
The associated right $E[G]$-module $E \otimes_{\mathcal{O}} T_{\chi}$ has character 
$\chi$ and $T_{\chi}$ is locally free of rank $\chi(1)$ over $\mathcal{O}$.

For any (left) $G$-module $M$ we set $M[\chi]:=T_{\chi} \otimes_{\Z} M$, upon which $G$
acts on the left by $t \otimes_{\Z} m \mapsto tg^{-1} \otimes_{\Z} g(m)$ for each
$t \in T_{\chi}, m \in M$ and $g \in G$. For any $G$-module $M$
and integer $i$ we write $\widehat{H}^{i}(G,M)$ for the Tate cohomology in degree $i$ of $M$ with
respect to $G$. We also write $M^{G}$ for the maximal submodule,
respectively $M_{G}$ for maximal quotient module, of $M$ upon which $G$ acts trivially. Then we obtain a left exact functor $M \mapsto M^{\chi}$, respectively right exact functor $M \mapsto M_{\chi}$, from the category of left $G$-modules to the category of $\mathcal{O}$-modules by setting $M^{\chi} := M[\chi]^{G}$ and $M_{\chi} := M[\chi]_{G} = T_{\chi} \otimes_{\Z[G]} M$.
The action of
$\Norm_{G} := \sum_{g \in G} g$ on $M[\chi]$ induces a homomorphism of $\mathcal{O}$-modules
$t(M, \chi): M_{\chi} \rightarrow M^{\chi}$ with kernel $\widehat{H}^{-1}(G, M[\chi])$ and cokernel
$\widehat{H}^{0}(G, M[\chi])$. Thus $t(M, \chi)$ is bijective whenever $M$, and hence also $M[\chi]$, is a cohomologically trivial $G$-module.

We shall henceforth take `module' to mean `left module' unless explicitly stated otherwise.

\subsection{Reducing to the case $L=K$}\label{reduce-to-L=K}

Assume the setting and notation of Theorem \ref{main-theorem} for the rest of this section. In the definitions of $U_{\chi}$ and $h(\mu_{K},\chi)$ below, we shall assume that $L=K$.
Hence $\chi$ is a non-trivial irreducible faithful character of $G=\Gal(K/k)$.

For the general case $L \neq K$, let $\phi$ be the character of $\Gal(K/k)$ that inflates
to $\chi$. Then $\phi$ is irreducible and faithful, and we have $E_{\chi}=E_{\phi}$.
We define $U_{\chi} := U_{\phi}$ and $h(\mu_{K},\chi) := h(\mu_{K},\phi)$.

\subsection{Definition of $U_{\chi}$}\label{sec:U-chi}

We first recall the following construction from \cite[\S 2]{MR2371374}.
Let $\frakp$ be a finite prime of $k$ and fix a prime $\Frakp$ of $K$ above $\frakp$.
We use the standard notation $G_{\frakp}, G_{0, \frakp}$ and
$\overline{G}_{\frakp} = G_{\frakp} / G_{0, \frakp}$ for, respectively,
the decomposition group, inertia group and the residual group of
$K/k$ at $\Frakp$. Choose a lift $F_{\frakp}$ (fixed for the rest of the paper)
of the Frobenius element ${\rm Fr_{\frakp}} \in \overline{G}_{\frakp}$ to $G_{\frakp} \subset G$.
For any subgroup $H$ of $G$, let $\Norm_{H} := \sum_{h \in H} h$.
We define central idempotents of $\Q[G_{\frakp}]$ as follows:
\[
\begin{array}{ll}
e_{\frakp}' := | G_{0, \frakp}|^{-1} \Norm_{G_{0, \frakp}}, & e_{\frakp}'' := 1 - e_{\frakp}'; \\
\bar{e}_{\frakp} :=  | G_{\frakp}|^{-1} \Norm_{G_{\frakp}}, & \bar{\bar{e}}_{\frakp} := 1 - \bar{e}_{\frakp}.
\end{array}
\]
We define the $\Z[G_{\frakp}]$-modules $U_{\frakp}$ by
\[
U_{\frakp} := \langle \Norm_{G_{0,\frakp}}, 1 - e_{\frakp}'F_{\frakp}^{-1} \rangle_{\Z[G_{\frakp}]} \subset
\Q[G_{\frakp}],
\]
and note that $U_{\frakp} = \Z[G_{\frakp}]$ if $\frakp$ is unramified in $K/k$.

Let $\nr_{e_{\chi}E[G]}:e_{\chi}E[G] \rightarrow E$ be the reduced
norm map (see \cite[\S 7D]{MR632548}). More explicitly, this is
the determinant map $e_{\chi} E[G] \cong \Mat_{\chi(1)}(E)
\rightarrow E$. 
We define a fractional ideal of $\mathcal{O}$ by setting
\[
U_{\chi} :=  \prod_{\frakp \in \mathcal{S}_{\mathrm{ram}}(K/k)}
\nr_{e_{\chi}E[G]}(e_{\chi}\mathfrak{M}U_{\frakp})\mathcal{O}.
\]
Here we use the following notation: for any finitely generated
$\Z[G_{\frakp}]$-submodule $V_{\frakp}$ of $\Q[G_\frakp]$ we write
$\nr_{e_{\chi}E[G]}(e_{\chi}\mathfrak{M}V_{\frakp})\mathcal{O}$
for the $\mathcal{O}$-submodule of $E$ that is generated
by the elements $\nr_{e_{\chi}E[G]}(x)$ as $x$ runs over 
$e_{\chi}\mathfrak{M}V_{\frakp}$
and note that this is indeed a fractional ideal of $\mathcal{O}$
since $\nr_{e_{\chi}E[G]}(e_{\chi}\mathfrak{M}) = \mathcal{O}$.

Recalling the hypothesis that $\chi$ is faithful and non-trivial,
it is a straightforward exercise to show that $U_{\chi}$ is the
trivial ideal if $G_{0,\frakp}$ is normal in $G$ for every $\frakp
\in \mathcal{S}_{\mathrm{ram}}(K/k)$ (the point is that $\chi$ must
be non-trivial on $G_{0,\frakp}$ and so $e_{\chi}$ annihilates both
$\Norm_{G_{0,\frakp}}$ and $e_{\frakp}'$). In particular, this is
the case if $G$ is abelian or Hamiltonian (i.e. every subgroup of $G$ is
normal), or every $\frakp \in \mathcal{S}_{\mathrm{ram}}(K/k)$ is
non-split in $K/k$ (i.e. $G=G_{\frakp}$).

\subsection{Definition of $h(\mu_{K},\chi)$}
For any finitely generated $\mathcal{O}$-module $M$, we let $\fit(M)$ denote
the Fitting ideal of $M$. We define $h(\mu_{K},\chi)$ to be
the natural truncated Euler characteristic
\[
h(\mu_{K},\chi) := \prod_{i=0}^{i=2} \fit(H^{i}(G, \mu_{K}[\chi]))^{(-1)^{i}}.
\]
Note that if $\mu_{K}$ is cohomologically trivial as a $G$-module
then $h(\mu_{K},\chi)=\fit(\mu_{K}[\chi]^{G})$.

\section{Algebraic $K$-theory}\label{nrbmkt}

In this section we summarise some of the necessary background material from algebraic $K$-theory.
Further details can be found in \cite{MR632548}, \cite{MR892316}, \cite[\S 2]{MR2371375} and
\cite[\S 2]{breuning-thesis}.

\subsection{Relative $K$-theory}
For any integral domain $R$ of characteristic $0$, any extension field $F$ of the field of
fractions of $R$ and any finite group $G$, let $K_{0}(R[G], F[G])$ denote the relative
algebraic $K$-group associated to the ring homomorphism $R[G] \hookrightarrow F[G]$.
We write $K_{0}(R[G])$ for the Grothendieck group of the category of finitely generated
projective $R[G]$-modules and $K_{1}(R[G])$ for the Whitehead group. There is a long
exact sequence of relative $K$-theory
\begin{equation}\label{long-exact-seq}
K_{1}(R[G]) \longrightarrow K_{1}(F[G]) \longrightarrow K_{0}(R[G],F[G])
\longrightarrow K_{0}(R[G]) \longrightarrow K_{0}(F[G]).
\end{equation}

\subsection{Reduced norms}
Let $\zeta(F[G])^{\times}$ denote the multiplicative group of the centre of $F[G]$.
There exists a reduced norm map $\nr_{F[G]} : (F[G])^{\times} \rightarrow \zeta(F[G])^{\times}$ whose
image is denoted by $\zeta(F[G])^{\times +}$ and there is a natural surjective map
$(F[G])^{\times} \rightarrow K_{1}(F[G])$, $x \mapsto (F[G], x_{r})$, where $x_{r}$ denotes
right multiplication by $x$. However, these maps have the same kernel, namely
the commutator subgroup $[(F[G])^{\times},(F[G])^{\times}]$, and so we have the following
commutative diagram
\[
\xymatrix@1@!0@=36pt {
(F[G])^{\times}  \ar@{->}[d]_{\nr_{F[G]}} \ar@{->}[rr] & & K_{1}(F[G])
\ar@{-->}[dll]^{\onr_{F[G]}}_{\simeq} \\
\zeta(F[G])^{\times +} }
\]
where $\onr_{F[G]}$ is the induced isomorphism. Note that the inverse map
$\onr_{F[G]}^{-1}:\zeta(F[G])^{\times +} \rightarrow K_{1}(F[G])$ can be described
explicitly by $\nr_{F[G]}(x) \mapsto (F[G], x_{r})$.
By composing the map $\onr_{F[G]}^{-1}: \zeta(F[G])^{\times +} \rightarrow K_{1}(F[G])$
with the boundary map $K_{1}(F[G]) \rightarrow K_{0}(R[G], F[G])$,
we therefore obtain a homomorphism
\begin{equation}\label{eqn:explicit-partial-hom}
\partial_{R[G], F[G]}: \zeta(F[G])^{\times +} \longrightarrow  K_{0}(R[G], F[G]),
\quad \nr_{F[G]}(x) \mapsto (R[G], x_{r}, R[G]).
\end{equation}
We note that if $F$ is algebraically closed then $\nr_{F[G]}$ is surjective, i.e.,
$\zeta (F[G])^{\times +} = \zeta (F[G])^{\times}$.
In any case, we always have $(\zeta(F[G])^{\times})^{2} \subseteq \zeta(F[G])^{\times +}$.

\subsection{Induction}\label{subsec:induction}

Let $H$ be a subgroup of $G$.
The functor $M \mapsto R[G] \otimes_{R[H]} M$ from projective $R[H]$-modules
to projective $R[G]$-modules and the corresponding functor from $F[H]$-modules to $F[G]$-modules induce induction maps $\mathrm{ind}^{G}_{H}$ for all $K$-groups in the exact sequence
\eqref{long-exact-seq}. We also obtain an induction map
$i_{H}^{G}:= \onr_{F[G]} \circ \mathrm{ind}^{G}_{H}\circ \onr_{F[H]}^{-1}: \zeta(F[H])^{\times +} \rightarrow \zeta(F[G])^{\times +}$.

Specialising to the case $R=\Z$ and $F=\R$, we
have the following commutative diagram
\begin{equation}\label{eqn:K-diagram}
\xymatrix@1@!0@=36pt {
(\R[H])^{\times}  \ar@{->}[d]^{\nr_{\R[H]}} \ar@{->}[rrrr]^{\textrm{inclusion}} & & & & (\R[G])^{\times} \ar@{->}[d]_{\nr_{\R[G]}} \\
\zeta(\R[H])^{\times +} \ar@{->}[rrrr]^{i^{G}_{H}} \ar@{->}[d]^{\onr_{\R[H]}^{-1}}_{\simeq}
\ar@/_2.5pc/[dd]_{\partial_{\Z[H], \R[H]}}
& & & & \zeta(\R[G])^{\times +}  \ar@/^2.5pc/[dd]^{\partial_{\Z[G], \R[G]}}
\ar@{->}[d]_{\onr_{\R[G]}^{-1}}^{\simeq} \\
K_{1}(\R[H])  \ar@{->}[d] \ar@{->}[rrrr]^{\mathrm{ind}^{G}_{H}} & & & & K_{1}(\R[G])  \ar@{->}[d] \\
K_{0}(\Z[H], \R[H]) \ar@{->}[rrrr]^{\mathrm{ind}^{G}_{H}} & & & & K_{0}(\Z[G], \R[G]).
}
\end{equation}

\subsection{The extended boundary homomorphism}
We recall some properties of the `extended boundary homomorphism'
$\hat{\partial}_{\Z[G], \R[G]} : \zeta(\R[G])^{\times} \rightarrow K_{0}(\Z[G], \R[G])$
first introduced in \cite[Lemma 9]{MR1884523} (a more conceptual description
is given in \cite[Lemma 2.2]{MR2371375}). The restriction of $\hat{\partial}_{\Z[G], \R[G]}$
to $\zeta(\R[G])^{\times +}$ is $\partial_{\Z[G], \R[G]}$.

\begin{lemma}\label{lemma:commutes-up-to-2}
Letting $\alpha$ and $\beta$ denote the natural inclusions, the diagram
\[
\xymatrix@1@!0@=36pt {
\zeta(\R[G])^{\times}  \ar@{->}[d]^{\alpha} \ar@{->}[rrrr]^{\hat{\partial}_{\Z[G], \R[G]}}
& & & & K_{0}(\Z[G], \R[G]) \ar@{->}[d]^{\beta} & & & \\
\zeta(\C[G])^{\times} \ar@{->}[rrrr]^{\partial_{\Z[G], \C[G]}}
& & & & K_{0}(\Z[G], \C[G])
}
\]
commutes up to elements of order $2$.
In other words, given $x \in \zeta(\R[G])^{\times}$ we have
\[
\beta(\hat{\partial}_{\Z[G], \R[G]}(x)) = \partial_{\Z[G], \C[G]}(\alpha(x)) \cdot u
\]
for some $u \in K_{0}(\Z[G], \C[G])$ of order at most $2$.
\end{lemma}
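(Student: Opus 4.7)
The plan is to exploit the containment $(\zeta(\R[G])^{\times})^{2} \subseteq \zeta(\R[G])^{\times +}$ recorded at the end of the preceding subsection. For any $x \in \zeta(\R[G])^{\times}$ the square $x^{2}$ lies in the subgroup where $\hat{\partial}_{\Z[G], \R[G]}$ coincides with $\partial_{\Z[G], \R[G]}$, so one can first prove an analogue of the lemma on the restricted domain $\zeta(\R[G])^{\times +}$ with \emph{exact} commutativity, and then deduce the general case up to $2$-torsion by squaring.

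For the exact commutativity on $\zeta(\R[G])^{\times +}$, take $w \in \zeta(\R[G])^{\times +}$ and write $w = \nr_{\R[G]}(y)$ with $y \in (\R[G])^{\times}$. The explicit description in \eqref{eqn:explicit-partial-hom} gives $\partial_{\Z[G], \R[G]}(w) = (\Z[G], y_{r}, \Z[G])$ in $K_{0}(\Z[G], \R[G])$, and applying $\beta$ yields the same triple viewed in $K_{0}(\Z[G], \C[G])$. On the other side, the standard compatibility of reduced norms with the inclusion $\R[G] \hookrightarrow \C[G]$ gives $\alpha(w) = \nr_{\C[G]}(y)$, so applying \eqref{eqn:explicit-partial-hom} again (this time with $\C$ in place of $\R$) yields $\partial_{\Z[G], \C[G]}(\alpha(w)) = (\Z[G], y_{r}, \Z[G])$ too. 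Hence $\beta \circ \partial_{\Z[G], \R[G]} = \partial_{\Z[G], \C[G]} \circ \alpha$ on $\zeta(\R[G])^{\times +}$.

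Applying this identity to $x^{2}$ and using that $\hat{\partial}_{\Z[G], \R[G]}$ restricts to $\partial_{\Z[G], \R[G]}$ there, together with the fact that all four maps involved are group homomorphisms, one obtains (in the multiplicative notation of the lemma)
\[
\beta(\hat{\partial}_{\Z[G], \R[G]}(x))^{2} = \partial_{\Z[G], \C[G]}(\alpha(x))^{2}
\]
in $K_{0}(\Z[G], \C[G])$, whence $u := \beta(\hat{\partial}_{\Z[G], \R[G]}(x)) \cdot \partial_{\Z[G], \C[G]}(\alpha(x))^{-1}$ satisfies $u^{2} = 1$, which is exactly the conclusion. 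The most delicate input, and the step where I expect to need to be most careful, is the compatibility $\alpha \circ \nr_{\R[G]} = \nr_{\C[G]} \circ \alpha$: this is a Wedderburn-by-Wedderburn verification that distinguishes the real, complex and quaternionic types of simple factors of $\R[G]$, with the complex case (where $M_{n}(\C) \otimes_{\R} \C \cong M_{n}(\C) \times M_{n}(\C)$) involving the reduced norms for both a character $\chi$ and its complex conjugate $\bar{\chi}$.
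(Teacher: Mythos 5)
Your argument is essentially the paper's own: square $x$ to land in $\zeta(\R[G])^{\times +}$ (where $\hat{\partial}$ restricts to $\partial$), use that reduced norms commute with extension of scalars to get exact commutativity there, and then conclude up to $2$-torsion. The paper compresses this into a single chain of equalities, but the ingredients and their order of use are identical.
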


\begin{proof}
Reduced norms commute with extension of scalars, and squares in $\zeta(\R[G])^{\times}$
are reduced norms. Thus for $x \in \zeta(\R[G])^{\times}$ we have
\[
\beta(\hat{\partial}_{\Z[G], \R[G]}(x))^{2}
= \beta(\hat{\partial}_{\Z[G], \R[G]}(x^{2}))
= \beta(\partial_{\Z[G], \R[G]}(x^{2}))
= \partial_{\Z[G], \C[G]}(\alpha(x^{2}))
= \partial_{\Z[G], \C[G]}(\alpha(x))^{2},
\]
from which the desired result follows immediately.
\end{proof}

Regarding $G$ as fixed, we henceforth abbreviate $\hat{\partial}_{\Z[G], \R[G]}$ and
$\partial_{\Z[G], \R[G]}$ to $\hat{\partial}$ and $\partial$, respectively.

\section{Centres of complex group algebras}\label{sec:complex-group-alg}

Let $G$ be a finite group and let $\mathrm{Irr}(G)$ be the set of irreducible complex characters of $G$.
Recall that there is a canonical isomorphism $\zeta(\C[G]) = \prod_{\chi \in \mathrm{Irr}(G)} \C$.
We shall henceforth use this identification without further mention.

\subsection{Explicit induction}
Let $H$ be a subgroup of $G$ and define a map
\begin{equation}\label{eqn:explicit-induction}
i_{H}^{G}: \zeta(\C[H]) \rightarrow \zeta(\C[G]), \quad
(\alpha_{\psi})_{\psi \in \mathrm{Irr}(H)} \mapsto
\left( \prod_{\psi \in \mathrm{Irr}(H)}
\alpha_{\psi}^{\langle \chi |_{H}, \psi \rangle_{H}} \right)_{\chi \in \mathrm{Irr}(G)},
\end{equation}
where $\langle \chi |_{H}, \psi \rangle_{H}$ denotes the usual inner
product of characters of $H$. The restriction of this map to
$\zeta(\C[H])^{\times}$ is the same as the map
$i_{H}^{G}: \zeta(\C[H])^{\times} \rightarrow \zeta(\C[G])^{\times}$
defined in \S \ref{subsec:induction} (with $F=\C$) so that using
the same name for these maps is justified. This map restricts further to
$i_{H}^{G}: \zeta(\R[H])^{\times +} \rightarrow \zeta(\R[G])^{\times +}$
(as defined in \S \ref{subsec:induction} with $F=\R$).

\subsection{The involution \#}
We write $\alpha \mapsto \alpha^{\#}$ for the involution of $\zeta(\C[G])$ induced by the
$\C$-linear anti-involution of $\C[G]$ that sends each element of $G$ to its inverse.
If $\alpha = (\alpha_{\chi})_{\chi \in \mathrm{Irr}(G)}$
then $\alpha^{\#} = (\alpha_{\bar{\chi}})_{\chi \in \mathrm{Irr}(G)}$.
Furthermore, $\#$ restricts to an involution of $\zeta(\R[G])^{\times+}$ which is compatible with induction, i.e., if $\alpha \in \zeta(\R[H])^{\times +}$ then $i_{H}^{G}(x^{\#}) = i_{H}^{G}(x)^{\#}$.

\subsection{Meromorphic $\zeta(\C[G])$-valued functions}
A meromorphic $\zeta(\C[G])$-valued function is a function of a complex variable $s$
of the form $s \mapsto g(s) = (g(s, \chi))_{\chi \in \mathrm{Irr}(G)}$ where each function
$s \mapsto g(s, \chi)$ is meromorphic. If $r(\chi)$ denotes the order of vanishing
of $g(s, \chi)$ at $s=0$ then we set
$g^{*}(0,\chi) := \lim_{s \rightarrow 0} s^{-r(\chi)} g(s,\chi)$ and
$g^{*}(0) := (g^{*}(0,\chi))_{\chi \in \mathrm{Irr}(G)} \in \zeta(\C[G])^{\times}$.

\section{$L$-functions}\label{sec:L-functs}

Let $K/k$ be a finite Galois extension of number fields with Galois group $G$
and let $\mathcal{S}$ be a finite set of places of $k$ containing the infinite places
$\mathcal{S}_{\infty}$.

\subsection{Artin $L$-functions}
Let $\frakp$ be a finite prime of $k$. Let $\psi$ be a complex character of $G_{\frakp}$
and choose a $\C[G_{\frakp}]$-module $V_{\psi}$ with character $\psi$. Recalling the
notation from \S \ref{sec:U-chi} we define
\begin{equation}\label{eqn:def-local-L}
L_{K_{\Frakp}/k_{\frakp}}(s, \psi) := \det_{\C}(1-F_{\frakp}(\mathrm{N}\frakp)^{-s} \mid V_{\psi}^{G_{0, \frakp}})^{-1},
\end{equation}
where $\mathrm{N}\frakp$ is the cardinality of the residue field of $\frakp$.
Note that $L_{K_{\Frakp}/k_{\frakp}}(s, \psi)$ only depends on $\frakp$ and not on the choice
of $\Frakp$. Furthermore, it is easy to see that
\[
L_{K_{\Frakp}/k_{\frakp}}(s, \psi+\psi')
=L_{K_{\Frakp}/k_{\frakp}}(s, \psi)L_{K_{\Frakp}/k_{\frakp}}(s, \psi')
\]
for two characters $\psi,\psi'$ of $G_{\frakp}$; thus the definition extends to all virtual
characters of $G_{\frakp}$.

Now let $\chi \in \mathrm{Irr}(G)$ and for each $\frakp$ let $\chi_{\frakp}$ denote the restriction of $\chi$ to $G_{\frakp}$. The Artin $L$-function attached to $\mathcal{S}$ and $\chi$
is defined as an infinite product
\begin{equation}\label{eqn:prod-local-L}
L_{K/k, \mathcal{S}}(s,\chi)
:= \prod_{\frakp \notin \mathcal{S}} L_{K_{\Frakp}/k_{\frakp}}(s, \chi_{\frakp})
\end{equation}
which converges for $\mathrm{Re}(s)>1$ and can be extended to the whole
complex plane by meromorphic continuation.

\subsection{Equivariant $L$-functions}
We define meromorphic $\zeta(\C[G_{\frakp}])$-valued functions by
\[
L_{K_{\Frakp}/k_{\frakp}}(s) := (L_{K_{\Frakp}/k_{\frakp}}(s, \psi))_{\psi \in \mathrm{Irr}(G_{\frakp})}
\]
and define the equivariant Artin $L$-function to be the meromorphic $\zeta(\C[G])$-valued function
\[
L_{K/k, \mathcal{S}}(s)
:= (L_{K/k, \mathcal{S}}(s,\chi))_{\chi \in \mathrm{Irr}(G)}.
\]
From \eqref{eqn:explicit-induction} and \eqref{eqn:prod-local-L} it is straightforward to check
that for $\mathrm{Re}(s)>1$ we have
\begin{equation}\label{eqn:L-func-induc-form}
L_{K/k, \mathcal{S}}(s)
= \prod_{\frakp \notin \mathcal{S}} i_{G_{\frakp}}^{G}(L_{K_{\Frakp}/k_{\frakp}}(s)).
\end{equation}
Note that $L^{*}_{K_{\Frakp}/k_{\frakp}}(0) \in \zeta(\R[G_{\frakp}])^{\times +}$
and $L^{*}_{K/k, \mathcal{S}}(0) \in \zeta(\R[G])^{\times}$ (see \cite[Lemma 2.7]{MR2371375}).
We henceforth abbreviate $L_{K/k, \mathcal{S}}(s)$ to $L_{\mathcal{S}}(s)$ and
$L_{\mathcal{S_{\infty}}}(s)$ to $L(s)$.

\section{Tate sequences, refined Euler characteristics and the ETNC}\label{sec:tate-ETNC}

Let $K/k$ be a finite Galois extension of number fields with Galois group $G$.
By $S$ we denote a finite $G$-stable set of places of $K$ containing the set of archimedean places $S_{\infty}$. We let $S_{\mathrm{ram}}$ denote the places of $K$ ramified in $K/k$.
The set of $k$-places below places in $S$ (respectively, $S_{\infty}$, $S_{\mathrm{ram}}$)
will be written $\mathcal{S}$ (respectively, $\mathcal{S}_{\infty}$, $\mathcal{S}_{\mathrm{ram}}$).
(Note that this is different from the notation used in \cite{MR2371374}.)
Let $E_{S} = \mathcal{O}_{K,S}^{\times}$ and $\Delta S$ be the kernel of the augmentation
map $\Z S \rightarrow \Z$. We shall henceforth abbreviate `cohomologically trivial' to `c.t.'\
and `finitely generated' to `f.g.' Note that as $G$ is finite, `$G$-c.t.'\ is equivalent to `of projective dimension at most $1$ over $\Z[G]$'.

Now let $S'$ denote a finite $G$-stable set of places of $K$ that is `large', i.e.,
$S_{\infty} \cup S_{\mathrm{ram}} \subseteq S'$ and $\cl_{K,S'}=0$.
Tate defined a canonical class
$\tau = \tau_{S'} \in \mathrm{Ext}_{\Z[G]}^{2}(\Delta S', E_{S'})$
(see \cite{MR0207680}, \cite[Chapter II]{MR782485}). The fundamental properties
of $\tau$ ensure the existence of so-called Tate sequences, that is,
four term exact sequences of f.g.\ $\Z[G]$-modules
\begin{equation}\label{eqn:tate-seq}
0 \longrightarrow E_{S'} \longrightarrow A \longrightarrow B \longrightarrow \Delta S' \longrightarrow 0
\end{equation}
representing $\tau$ with $A$ $G$-c.t.\ and $B$ projective. In \cite{MR1394524},
Ritter and Weiss construct a Tate sequence for $S$ `small'
\[
0 \longrightarrow E_{S} \longrightarrow A \longrightarrow B \longrightarrow \nabla \longrightarrow 0
\]
where $\nabla$ is given by a short exact sequence
$0 \rightarrow \cl_{K,S} \rightarrow \nabla \rightarrow \overline{\nabla} \rightarrow 0$.

In order to take full advantage of these sequences we shall use refined Euler characteristics,
which we now briefly review in the special case of interest to us.
For a full account, we refer the reader to \cite{MR2076565} (also see \cite[\S 1.2]{MR1863302}).
Following \cite[\S 3]{MR2371374}, we adopt a slight change from the usual convention (this only
results in a sign change).
A `metrised' complex over $\Z[G]$ consists of a complex in degrees $0$ and $1$
\[
A \longrightarrow B,
\]
together with an $\R[G]$-isomorphism
\begin{equation}\label{RVRU}
\varphi: \R \otimes U \longrightarrow \R \otimes V
\end{equation}
where both $A$ and $B$ are f.g.\ and c.t.\ over $G$, and $U$ (respectively $V$) is the kernel
(respectively cokernel) of $A \rightarrow B$.
To every metrised complex $E=(A \rightarrow B, \varphi)$
we can associate a refined Euler characteristic $\euler(E) \in K_{0}(\Z[G], \R[G])$
as follows. We can write down a four-term exact sequence
\begin{equation}\label{UABV}
0 \longrightarrow U \longrightarrow A \longrightarrow B
\longrightarrow V \longrightarrow 0,
\end{equation}
which gives rise to the tautological exact sequences
\begin{eqnarray*}
0 \longrightarrow \ker(\R \otimes B \rightarrow \R \otimes V) \longrightarrow \R \otimes B \longrightarrow \R \otimes V \longrightarrow 0,\\
0 \longrightarrow \R \otimes U \longrightarrow \R \otimes A \longrightarrow  \im(\R \otimes A \rightarrow \R \otimes B) \longrightarrow 0.
\end{eqnarray*}
We choose splittings for these sequences and obtain an isomorphism
$\tilde{\varphi}: \R \otimes A \rightarrow \R \otimes B$
\begin{eqnarray*}
\R \otimes A &\cong&
\im(\R \otimes A \rightarrow \R \otimes B)  \oplus (\R \otimes U)
= \ker(\R \otimes B \rightarrow \R \otimes V) \oplus (\R \otimes U) \\
&\cong& \ker(\R \otimes B \rightarrow \R \otimes V) \oplus (\R \otimes V) \\
&\cong& \R \otimes B
\end{eqnarray*}
where the first and third maps are obtained by the chosen splittings and the second
map is induced by $\varphi$. (We refer to $\tilde{\varphi}$ as a `transpose' of $\varphi$.)
If $A$ and $B$ are both $\Z[G]$-projective, we define
$\euler(A \rightarrow B, \varphi) = (A, \tilde{\varphi}, B) \in K_{0}(\Z[G], \R[G])$. This definition
can be extended to the more general case where $A$ and $B$ are c.t.\ over $G$.
In all cases, $\euler(A \rightarrow B, \varphi)$ can be shown to be independent of the choice
of splittings.

We note several properties of $\euler$. Firstly,
$\euler(A \rightarrow B, \varphi)$ remains unchanged if $\varphi$ is composed
with an automorphism of determinant $1$ on either side (see \cite[Proposition 1.2.1(ii)]{MR1863302}).
Secondly, if the metrisation (\ref{RVRU}) is given, then the class of the exact sequence
(\ref{UABV}) in $\mathrm{Ext}_{\Z[G]}^{2}(V,U)$ uniquely determines
$\euler(A \rightarrow B, \varphi)$ (see \cite[Proposition 1.2.2 and Remark 1.2.3]{MR1863302}). Finally,
it is straightforward to show that $\euler$ is compatible with induction, i.e.,
if $H$ is a subgroup of $G$ and $A \rightarrow B$ is an appropriate complex of $\Z[H]$-modules
with metrisation $\varphi$, then
\begin{equation}\label{eqn:euler-induction}
\mathrm{ind}_{H}^{G}(\euler(A \rightarrow B, \varphi)) = \euler(\mathrm{ind}_{H}^{G} A \rightarrow \mathrm{ind}_{H}^{G} B, \mathrm{ind}_{H}^{G} \varphi).
\end{equation}

Now let $E$ be the complex formed by the middle two terms of the Tate sequence
(\ref{eqn:tate-seq}) and metrise it by setting
$U=E_{S'}$, $V=\Delta S'$ and $\varphi^{-1}:\R E_{S'} \rightarrow \R \Delta S'$
the negative of the usual Dirichlet map, so $\varphi^{-1}(u) = - \sum_{v \in S'} \log |u|_{v} \cdot v$.
The equivariant Tamagawa number is defined to be
\[
T\Omega(K/k, 0) := \psi^{*}_{G}(\hat{\partial}(L_{\mathcal{S}'}^{*}(0)^{\#})-\euler(E)) \in K_{0}(\Z[G], \R[G]),
\]
where $\psi^{*}_{G}$ is a certain involution of $K_{0}(\Z[G], \R[G])$ which can be ignored for
our purposes. (Note that we have $-\euler(E)$ rather than $+\euler(E)$ here because, as mentioned above, our definition of $\euler$ is slightly different from the usual convention, resulting in a sign change.) The equivariant Tamagawa number conjecture (ETNC) in this context (i.e. for the motive $h^{0}(K)$ with coefficients in $\Z[G]$) simply
states that $T\Omega(K/k, 0)$ is zero (see \cite{MR1884523}, \cite{MR1863302}).
One can also re-interpret other well-known conjectures
using this framework. For example, Stark's Main Conjecture is equivalent to the
statement that $T\Omega(K/k, 0)$ belongs to $K_{0}(\Z[G], \Q)$.
If $\mathcal{M}$ is a maximal $\Z$-order
in $\Q[G]$ containing $\Z[G]$ then the Strong Stark Conjecture
can be interpreted as
\[
T\Omega(K/k, 0) \in K_{0}(\Z[G], \Q[G])_{\mathrm{tors}}
= \ker(K_{0}(\Z[G], \Q[G]) \rightarrow K_{0}(\mathcal{M}, \Q[G])),
\]
i.e., `the ETNC holds modulo torsion' (see \cite[\S 2.2]{MR1863302}).

\section{Metrised commutative diagrams}\label{sec:metrised-diagrams}

We briefly review Greither's construction of certain metrised commutative diagrams
which we shall use in a crucial way;
we leave the reader to consult \cite{MR2371374} for further details.
The principal tool used in this construction is the aforementioned
`Tate sequence for small $S$' of Ritter and Weiss (see \cite{MR1394524}).
Although the only application in \cite{MR2371374} is in the case that $G$ is abelian,
the explicit construction given there is in fact valid for the general case once one makes
the minor modifications to `the core diagram' of \cite[\S 5]{MR2371374} described in the
proof of \cite[Proposition 4.4]{nickel-LRNC}.

We adopt the setup and notation of \S \ref{sec:tate-ETNC} and add the further
hypotheses that $k$ is totally real, $K$ is a CM field and $S'$ is `larger' in the sense
of \cite{MR1394524}, that is: $S_{\infty} \cup S_{\mathrm{ram}} \subseteq S'$,
$\cl_{K,S'}=0$, and $G=\cup_{\frakp \in S'} G_{\frakp}$.
We write $j$ for the unique complex conjugation in $G$ and define
$R := \Z[G][\frac{1}{2}]/(1+j)$. For every $G$-module $M$ we let
$M^{-} := R \otimes_{\Z[G]} M$ (this notation, which includes inversion of $2$, is used in \cite{MR2371374} and is non-standard but practical).
Note that the construction of refined Euler characteristic also works for complexes over $R$.

Let $C$ be the free
$\Z[G]$-module with basis elements $x_{\frakp}$, where $\frakp$ runs over
$\mathcal{S}' \backslash \mathcal{S}_{\infty}$. Using the Tate sequences for $S$ `larger' and
for $S$ `small', Greither constructs the following diagrams.

\begin{equation}
\tag{D1}
\xymatrix@1@!0@=36pt {
E_{S'} \ar@{->}[rr] & & A \ar@{->}[rr] & & B_{S'} \ar@{->}[rr] & & \Delta S' \\
C \oplus E_{S_{\infty}} \ar@{->}[rr] \ar@{->}[u] & & C \oplus A \ar@{->}[rr] \ar@{->}[u] & &
B_{S_{\infty}} \ar@{->}[rr] \ar@{->}[u] & & \nabla \ar@{->}[u] \\
Z' \ar@{->}[rr] \ar@{->}[u] & & C \ar@{->}[rr] \ar@{->}[u] & & C \ar@{->}[rr] \ar@{->}[u] & & Z'' \ar@{->}[u] \\
}
\end{equation}

\begin{equation}
\tag{D2}
\xymatrix@1@!0@=36pt {
E_{S_{\infty}} \ar@{->}[rr] & & A \ar@{->}[rr] & & \tilde{B} \ar@{->}[rr] & & \nabla / \delta(C) \\
C \oplus E_{S_{\infty}} \ar@{->}[rr] \ar@{->}[u] & & C \oplus A \ar@{->}[rr] \ar@{->}[u] & &
B_{S_{\infty}} \ar@{->}[rr] \ar@{->}[u] & & \nabla \ar@{->}[u] \\
C \ar@{->}[rr]^{\mathrm{id}} \ar@{->}[u] & & C \ar@{->}[rr]^{0} \ar@{->}[u] & &
C \ar@{->}[rr]^{\mathrm{id}} \ar@{->}[u] & & C \ar@{->}[u]^{\delta} \\
}
\end{equation}

In the original construction $S$ is used in place of $S_{\infty}$,
and only later does the author specialise to the case $S=S_{\infty}$. Note that the middle
rows of (D1) and (D2) are identical and that the middle map of the bottom row of (D1) is
far from the identity in general. The `minus part' of each diagram is denoted (D1)$^{-}$ or
(D2)$^{-}$, as appropriate.

In \cite[\S 7]{MR2371374} each row is given a metrisation and we label the corresponding
refined Euler characteristics as follows:
\begin{eqnarray*}
X_{S'} &=& \euler(\textrm{top row of (D1)}),\\
X_{1} &=& \euler(\textrm{middle row of (D1)}) = \euler(\textrm{middle row of (D2)}),\\
X_{C} &=& \euler(\textrm{bottom row of (D1)}),\\
X_{\infty}^{-} &=& \euler(\textrm{top row of (D2)}^{-}),\\
X_{2} &=& \euler(\textrm{bottom row of (D2)}).
\end{eqnarray*}
(Note that there is a typo in the definition of $X_{\infty}^{-}$ in \cite[top of p.1418]{MR2371374}.)
The metrisations are chosen to be `compatible' within the minus part of each diagram
(see  \cite[Lemmas 7.3 and 7.4]{MR2371374}) so that we have
\[
X_{1}^{-} = X_{S'}^{-} + X_{C}^{-} \quad \textrm{ and } \quad
X_{1}^{-} = X_{\infty}^{-} + X_{2}^{-} \quad \textrm{ in } \quad K_{0}(R, \R[G]^{-}),
\]
where we denote the natural map $K_{0}(\Z[G], \R[G]) \rightarrow K_{0}(R, \R[G]^{-})$ by a minus exponent.
Putting these two equations together gives the following.

\begin{prop}\label{prop:X-eqn}
We have $X_{\infty}^{-} = X^{-}_{S'} + X_{C}^{-} - X_{2}^{-}$ in $K_{0}(R, \R[G]^{-})$.
\end{prop}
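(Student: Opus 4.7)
The proposition is, in effect, a one-line consequence of the two identities displayed immediately before its statement, so the real content lies in justifying (or citing) those identities rather than in the final manipulation. My plan is therefore to first recall why the two displayed equalities hold, and then observe that the claim follows by elementary algebra in the abelian group $K_{0}(R, \R[G]^{-})$.

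For the first equality $X_{1}^{-} = X_{S'}^{-} + X_{C}^{-}$, I would invoke the additivity of refined Euler characteristics applied to the three rows of the diagram (D1)$^{-}$: the middle row sits between the top and bottom rows, and the vertical maps realise this row as a short exact sequence of two-term complexes. Because the metrisations on the three rows of (D1)$^{-}$ are chosen compatibly (this is the content of Greither's \cite[Lemma 7.3]{MR2371374}), the additivity of $\euler$ for short exact sequences of metrised complexes gives $X_{1}^{-} = X_{S'}^{-} + X_{C}^{-}$. The analogous argument applied to (D2)$^{-}$, together with the compatibility of metrisations there (\cite[Lemma 7.4]{MR2371374}), yields $X_{1}^{-} = X_{\infty}^{-} + X_{2}^{-}$.

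Granting these two identities, the proposition follows immediately: rearranging the second gives
\[
X_{\infty}^{-} = X_{1}^{-} - X_{2}^{-},
\]
and substituting the first equation for $X_{1}^{-}$ yields
\[
X_{\infty}^{-} = X_{S'}^{-} + X_{C}^{-} - X_{2}^{-},
\]
as claimed.

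The main (and only nontrivial) obstacle here is bookkeeping: one must verify that the metrisations on the rows of (D1)$^{-}$ and (D2)$^{-}$ are genuinely compatible with one another, so that the two equations are literally equalities in $K_{0}(R, \R[G]^{-})$ rather than only equalities up to a controllable error term. Since the paper explicitly cites Greither's Lemmas 7.3 and 7.4 for precisely this compatibility, I would simply refer the reader to those results and not attempt to re-derive them; the remaining step is the trivial subtraction above.
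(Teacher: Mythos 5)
Your argument is exactly the paper's: the paper derives the two identities $X_{1}^{-} = X_{S'}^{-} + X_{C}^{-}$ and $X_{1}^{-} = X_{\infty}^{-} + X_{2}^{-}$ from the compatible metrisations on (D1)$^{-}$ and (D2)$^{-}$ (citing Greither's Lemmas 7.3 and 7.4), and then obtains the proposition by the same one-line subtraction. Your added gloss on additivity of $\euler$ for short exact sequences of metrised complexes is correct background but does not change the argument.
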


\section{Computing refined Euler characteristics}

We compute the refined Euler characteristics of the metrised commutative diagrams of
\S \ref{sec:metrised-diagrams} (i.e. of \cite[\S 3]{MR2371374}) in the non-abelian case.
Recall the definitions of $e_{\frakp}', e_{\frakp}'', \bar{e}_{\frakp}, \bar{\bar{e}}_{\frakp}$
from \S \ref{sec:U-chi} and note that even though $\bar{\bar{e}}_{\frakp}e_{\frakp}'' = e_{\frakp}''$, we sometimes retain $\bar{\bar{e}}_{\frakp}$ for clarity.

Recall that $h$ is a positive integer multiple of $|\cl_{K}|$ (see \cite[p.1411]{MR2371374}).

\begin{lemma}\label{lem:vp}
Let $v_{\frakp} = h |G_{\frakp}| \cdot \bar{e}_{\frakp} + \bar{\bar{e}}_{\frakp} \in \Q[G_{\frakp}]$.
Then in $ K_{0}(\Z[G], \R[G])$ we have
\[
X_{C} = \sum_{\frakp \in \mathcal{S}' \backslash \mathcal{S}_{\infty}} \partial (\nr_{\R[G]}(v_{\frakp})).
\]
\end{lemma}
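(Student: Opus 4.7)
The plan is to compute $X_C$ by exploiting the direct-sum decomposition of $C = \bigoplus_\frakp \Z[G]x_\frakp$ indexed by the primes $\frakp \in \mathcal{S}' \setminus \mathcal{S}_\infty$. First, I would extract from Greither's construction of diagram (D1) in \cite[\S 5]{MR2371374} (with the non-abelian modifications described in the proof of \cite[Proposition 4.4]{nickel-LRNC}) the fact that the bottom row
\[
Z' \longrightarrow C \longrightarrow C \longrightarrow Z''
\]
splits as a direct sum, over $\frakp$, of four-term sequences
\[
Z'_\frakp \longrightarrow \mathrm{ind}_{G_\frakp}^G(\Z[G_\frakp]) \longrightarrow \mathrm{ind}_{G_\frakp}^G(\Z[G_\frakp]) \longrightarrow Z''_\frakp,
\]
where the middle map is induced from the endomorphism of $\Z[G_\frakp]$ (viewed inside $\Q[G_\frakp]$) given by multiplication by $v_\frakp \in \zeta(\Q[G_\frakp])$, and $Z'_\frakp$, $Z''_\frakp$ are the corresponding kernel and cokernel. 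A short computation shows that $v_\frakp$ is a unit of $\zeta(\Q[G_\frakp])$ with inverse $(h|G_\frakp|)^{-1}\bar{e}_\frakp + \bar{\bar{e}}_\frakp$, so the local kernel and cokernel are $\Z$-torsion; consequently the chosen metrisation of the bottom row in \cite[\S 7]{MR2371374} is forced to be the zero map between zero modules.

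Second, I would combine additivity of refined Euler characteristics over direct sums of metrised complexes with the induction compatibility \eqref{eqn:euler-induction} to obtain
\[
X_C \,=\, \sum_{\frakp \in \mathcal{S}' \setminus \mathcal{S}_\infty} \mathrm{ind}_{G_\frakp}^G\!\bigl(\euler\bigl(\Z[G_\frakp] \xrightarrow{v_\frakp} \Z[G_\frakp]\bigr)\bigr),
\]
where each local metrised complex is placed in degrees $0$ and $1$ and carries the trivial metrisation.

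Third, to compute the local contribution I would note that, since the metrisation is the zero map between zero modules, the `transpose' isomorphism $\tilde{\varphi}\colon \R[G_\frakp] \to \R[G_\frakp]$ is just right-multiplication by $v_\frakp$. Hence by the explicit formula \eqref{eqn:explicit-partial-hom}
\[
\euler\bigl(\Z[G_\frakp] \xrightarrow{v_\frakp} \Z[G_\frakp]\bigr) \,=\, \bigl(\Z[G_\frakp],\,(v_\frakp)_r,\,\Z[G_\frakp]\bigr) \,=\, \partial_{\Z[G_\frakp],\R[G_\frakp]}\bigl(\nr_{\R[G_\frakp]}(v_\frakp)\bigr).
\]
Finally, applying the commutative diagram \eqref{eqn:K-diagram} with $H = G_\frakp$ gives
\[
\mathrm{ind}_{G_\frakp}^G\!\bigl(\partial_{\Z[G_\frakp],\R[G_\frakp]}(\nr_{\R[G_\frakp]}(v_\frakp))\bigr) \,=\, \partial\bigl(\nr_{\R[G]}(v_\frakp)\bigr),
\]
viewing $v_\frakp \in \R[G_\frakp]^{\times}$ as an element of $\R[G]^\times$ via the natural inclusion. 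Summing over $\frakp$ then yields the claimed identity.

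The main obstacle is the first step: one must carefully unpack Greither's construction to verify that the middle map in the bottom row of (D1) really is the direct sum of multiplications by $v_\frakp$. This amounts to tracking, prime by prime, the discrepancy between the Tate sequences for the `larger' set $S'$ and for the `small' set $S = S_\infty$; in this bookkeeping the factor $h|G_\frakp|\bar{e}_\frakp$ encodes both the class-number trivialisation (via $h$) and the norm element $\Norm_{G_\frakp}$ governing the behaviour at each place, while $\bar{\bar{e}}_\frakp$ reflects that nothing changes on the complementary idempotent. Once this identification is secured, the remainder of the argument is the formal $K$-theoretic bookkeeping outlined above.
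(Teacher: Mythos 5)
Your proposal is correct and follows essentially the same route as the paper. The paper's proof is considerably terser: it simply cites Greither \cite[Lemma~7.6]{MR2371374} for the key fact that the transposed isomorphism at the local level is (right) multiplication by $v_{\frakp}$, observes that this carries over verbatim to the non-abelian case because $v_{\frakp}$ is central in $\R[G_{\frakp}]$, and then performs the same formal bookkeeping you outline (compatibility of $\euler$ with induction \eqref{eqn:euler-induction}, commutativity of \eqref{eqn:K-diagram}, and the explicit formula \eqref{eqn:explicit-partial-hom}). Your steps~2--4 match this bookkeeping exactly, and your step~1 simply makes explicit the content of the citation, which the paper treats as a black box.
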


\begin{proof}
This is proven in the abelian case in \cite[Lemma 7.6]{MR2371374}.
The key point is that the `transposed isomorphism' at the local level
is multiplication by $v_{\frakp}$ (note this is central in $\R[G_{\frakp}]$),
and this part of the proof holds without change in the non-abelian case.
Hence
\[
X_{C} =
\sum_{\frakp \in \mathcal{S}' \backslash \mathcal{S}_{\infty}}
\ind ( (\Z[G_{\frakp}], (v_{\frakp})_{r}, \Z[G_{\frakp}]) ) =
\sum_{\frakp \in \mathcal{S}' \backslash \mathcal{S}_{\infty}}
(\Z[G], (v_{\frakp})_{r}, \Z[G]) =
\sum_{\frakp \in \mathcal{S}' \backslash \mathcal{S}_{\infty}} \partial (\nr_{\R[G]}(v_{\frakp})),
\]
where the equalities are due, respectively, to the compatibility of $\euler$ with induction
(i.e. equation \eqref{eqn:euler-induction} with $H=G_{\frakp}$), the commutativity of diagram
\eqref{eqn:K-diagram} (again with $H=G_{\frakp}$), and the explicit formula
\eqref{eqn:explicit-partial-hom} (with $R=\Z$ and $F=\R$).
\end{proof}

Recall that $h_{\frakp} = g_{\frakp} \cdot e_{\frakp}' + e''_{\frakp}$ where
$g_{\frakp} = |G_{0,\frakp}|+1-F_{\frakp}^{-1}$ (see \cite[p.1420]{MR2371374}).

\begin{lemma}\label{tp}
Let
\[
t_{\frakp}
= h \log \mathrm{N} \Frakp \cdot \bar{e}_{\frakp}
+ \frac{1-F_{\frakp}^{-1}}{h_{\frakp}} \cdot \bar{\bar{e}}_{\frakp} e_{\frakp}'
+ \bar{\bar{e}}_{\frakp} e_{\frakp}''.
\]
Then in $K_{0}(R, \R[G]^{-})$ we have
\[
X_{2}^{-} =  \sum_{\frakp \in \mathcal{S}' \backslash \mathcal{S}_{\infty}} \partial (\nr_{\R[G]}(t_{\frakp}))^{-}.
\]
\end{lemma}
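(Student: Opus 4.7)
The plan is to follow the template of Lemma \ref{lem:vp} closely. The bottom row of (D2) decomposes naturally as a direct sum of local complexes indexed by primes $\frakp \in \mathcal{S}' \setminus \mathcal{S}_\infty$, with each local complex living over $\Z[G_\frakp]$ after applying the induction $\mathrm{ind}^G_{G_\frakp}$ (reflecting that $C$ is built from the basis elements $x_\frakp$). By the explicit construction of the metrisation in \cite[\S 7]{MR2371374}, this direct-sum decomposition is compatible with the metrisation, so the compatibility of $\euler$ with induction (equation \eqref{eqn:euler-induction}) decomposes $X_2$ as a sum of induced local refined Euler characteristics.

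The core content of the lemma is the identification of the local transposed isomorphism at $\frakp$ with right multiplication by $t_\frakp$. This is essentially the computation carried out in \cite[\S 7]{MR2371374} in the abelian case, and the crucial non-abelian input is to verify that $t_\frakp$ actually lies in $\zeta(\R[G_\frakp])^\times$, so that the reduced norm is defined and $(t_\frakp)_r$ is genuinely an endomorphism of $\R[G_\frakp]$ as an $\R[G_\frakp]$-module. All idempotents appearing in $t_\frakp$ are central because $G_{0,\frakp}$ is normal in $G_\frakp$; moreover, although $F_\frakp^{-1}$ is not in general central, the product $e_\frakp' F_\frakp^{-1}$ is central, since $e_\frakp' \R[G_\frakp] \cong \R[\overline{G}_\frakp]$ is a commutative ring (as $\overline{G}_\frakp$ is cyclic, generated by the image of $F_\frakp$). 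These observations place each summand of $t_\frakp$, and hence $t_\frakp$ itself, in $\zeta(\R[G_\frakp])^\times$; a separate straightforward check shows that $h_\frakp$ is a unit in $\R[G_\frakp]$ so that the quotient $(1-F_\frakp^{-1})/h_\frakp$ makes sense on the relevant component.

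With these ingredients, the argument concludes exactly as in Lemma \ref{lem:vp}: the chain of equalities
\[
X_2^- = \sum_{\frakp} \mathrm{ind}^G_{G_\frakp}\!\left((\Z[G_\frakp], (t_\frakp)_r, \Z[G_\frakp])\right)^- = \sum_{\frakp} (\Z[G], (t_\frakp)_r, \Z[G])^- = \sum_{\frakp} \partial(\nr_{\R[G]}(t_\frakp))^-
\]
follows from the compatibility of $\euler$ with induction, the commutativity of \eqref{eqn:K-diagram} with $H = G_\frakp$, and the explicit formula \eqref{eqn:explicit-partial-hom}, all evaluated in the minus part throughout. The principal obstacle is the careful unpacking of the metrisation of the bottom row of (D2) as constructed in \cite[\S 7]{MR2371374}, so as to confirm that the local transposed isomorphism is precisely multiplication by the specific element $t_\frakp$ written down in the statement; once this identification is in place, the centrality observations above reduce the non-abelian case to a formal transcription of the abelian computation.
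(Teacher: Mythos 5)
Your proposal is correct and follows essentially the same route as the paper: decompose the bottom row of (D2) into local pieces indexed by $\frakp$, identify the local map as multiplication by $t_\frakp$ (citing Greither's computation), verify centrality, and then run the same chain of equalities as in Lemma \ref{lem:vp} using compatibility of $\euler$ with induction, the commutative diagram \eqref{eqn:K-diagram}, and the explicit description \eqref{eqn:explicit-partial-hom}. One small observation the paper makes explicitly, and which you leave implicit, is that because the middle map of the bottom row of (D2) is zero, the transpose of the metrisation $\psi$ is $\psi$ itself, so there is nothing to compute in passing from the metrisation to its transpose; this is what makes the identification of the local map with $(t_\frakp)_r$ an immediate consequence of \cite[Lemma 8.6]{MR2371374} rather than requiring a fresh transpose calculation. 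Your centrality argument via $e_\frakp'\R[G_\frakp]\cong\R[\overline{G}_\frakp]$ being commutative is a clean way to record the non-abelian input.
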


\begin{proof}
Recall that the bottom row of (D2) is metrised with the map $\psi:\R C \rightarrow \R C$
(see \cite[p.1417]{MR2371374}) and that $X_{2}$ is the associated refined Euler characteristic
(see \S \ref{sec:metrised-diagrams}). Because of the zero map in the middle of the row, the transpose of $\psi$ is just
$\psi$ itself. In \cite[Lemma 8.6]{MR2371374} it is shown that in the minus part
$\psi$ is the direct sum of maps induced from endomorphisms
$\psi_{\frakp}$ of $\R[G_{\frakp}] \cdot x_{\frakp}$ given by multiplication by $t_{\frakp}$
(note this is central in $\R[G_{\frakp}]$), and this holds without change in the
non-abelian case.
The result then follows in the same way as in the proof of Lemma \ref{lem:vp} above,
but with $v_{\frakp}$ replaced by $t_{\frakp}$.
\end{proof}

\begin{definition}
Fix a finite prime $\frakp$ of $k$. Let $\psi \in \mathrm{Irr}(G_{\frakp})$ and let $e_{\psi}$ be the primitive central idempotent of $\C[G_{\frakp}]$ attached to $\psi$.
In the spirit of \cite[p.1421]{MR2371374}, we say that
\begin{itemize}
\item[1.] $\psi \in T_{1}(\frakp)$ if $\psi$ is trivial, i.e., $e_{\psi}=\bar{e}_{\frakp}$;
\item[2.] $\psi \in T_{2}(\frakp)$ if $\psi$ is non-trivial but trivial on $G_{0,\frakp}$, i.e.,
$e_{\psi}\bar{e}_{\frakp}=0$ but $e_{\psi} e_{\frakp}'=e_{\psi}$;
\item[3.] $\psi \in T_{3}(\frakp)$ if $\psi$ is non-trivial on $G_{0,\frakp}$, i.e., $e_{\psi} e_{\frakp}'=0$.
\end{itemize}
This division into types corresponds to the decomposition of $1$ into orthogonal idempotents
\[
1 = \bar{e}_{\frakp} + \bar{\bar{e}}_{\frakp} e_{\frakp}' + \bar{\bar{e}}_{\frakp} e_{\frakp}''
\]
in $\Q[G_{\frakp}] \subseteq \C[G_{\frakp}]$, where $\psi \in T_{i}(\frakp)$ corresponds to
$e_{\psi}$ sending the $i$th of the right-hand summands to $e_{\psi}$ and the other two
to $0$, for $i=1,2,3$. Note that if $\psi \in T_{2}(\frakp)$ then $\psi$ factors through
the cyclic group $\overline{G}_{\frakp} = G_{\frakp} / G_{0, \frakp}$ and so $\psi$ is linear.
\end{definition}

\begin{lemma}\label{chi-parts}
Fix a prime $\frakp \in \mathcal{S}' \backslash \mathcal{S}_{\infty}$
and let $\psi \in \mathrm{Irr}(G_{\frakp})$. We have
\[
e_{\psi}(v_{\frakp} t_{\frakp}^{-1}h_{\frakp}^{-1})
= \left\{
\begin{array}{ll}
(e_{\psi} (\log \mathrm{N} \frakp))^{-1} & \textrm{ if } \psi \in T_{1}(\frakp);\\
(e_{\psi} (1-F_{\frakp}^{-1}))^{-1} & \textrm{ if } \psi \in T_{2}(\frakp);\\
e_{\psi} & \textrm{ if } \psi \in T_{3}(\frakp).
\end{array}
\right.
\]
\end{lemma}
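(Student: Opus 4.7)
The proof is a straightforward case-by-case calculation, using the orthogonal idempotent decomposition
\[
1 = \bar{e}_{\frakp} + \bar{\bar{e}}_{\frakp} e_{\frakp}' + \bar{\bar{e}}_{\frakp} e_{\frakp}''
\]
in $\Q[G_{\frakp}]$. The plan is to apply $e_{\psi}$ to each of $v_{\frakp}$, $t_{\frakp}$ and $h_{\frakp}$ separately, using the fact that exactly one of the three summands above is $e_{\psi}$ and the other two are $0$, according to the type of $\psi$.

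In Case 3 ($\psi \in T_{3}(\frakp)$) the calculation is immediate: $e_{\psi}$ kills both $\bar{e}_{\frakp}$ and $\bar{\bar{e}}_{\frakp} e_{\frakp}'$, and on the remaining summand $\bar{\bar{e}}_{\frakp}e_{\frakp}''$ each of $v_{\frakp}$, $t_{\frakp}$ and $h_{\frakp}$ reduces to $1$, so the product $e_{\psi} v_{\frakp} t_{\frakp}^{-1} h_{\frakp}^{-1}$ is just $e_{\psi}$.

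In Case 1 ($\psi \in T_{1}(\frakp)$, so $e_{\psi} = \bar{e}_{\frakp}$) only the $\bar{e}_{\frakp}$-summand of each factor survives. This gives $e_{\psi} v_{\frakp} = h|G_{\frakp}| \cdot e_{\psi}$ and $e_{\psi} t_{\frakp} = h \log \mathrm{N}\Frakp \cdot e_{\psi}$. For $h_{\frakp}$, one has $e_{\psi} h_{\frakp} = e_{\psi} g_{\frakp}$ (since $e_{\psi} e_{\frakp}' = e_{\psi}$), and because the trivial character sends $F_{\frakp}^{-1}$ to $1$, this equals $|G_{0,\frakp}| \cdot e_{\psi}$. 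Combining and using the standard identity $\log \mathrm{N}\Frakp = [\overline{G}_{\frakp}:1] \cdot \log \mathrm{N}\frakp = (|G_{\frakp}|/|G_{0,\frakp}|) \log \mathrm{N}\frakp$ yields $(\log \mathrm{N}\frakp)^{-1} \cdot e_{\psi}$, as required.

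In Case 2 ($\psi \in T_{2}(\frakp)$) only the $\bar{\bar{e}}_{\frakp} e_{\frakp}'$-summand survives, and $\psi$ is linear so $e_{\psi} F_{\frakp}^{-1} = \psi(F_{\frakp}^{-1}) e_{\psi}$. Then $e_{\psi} v_{\frakp} = e_{\psi}$, $e_{\psi} h_{\frakp} = e_{\psi} g_{\frakp} = (|G_{0,\frakp}| + 1 - \psi(F_{\frakp}^{-1})) \cdot e_{\psi}$, and
\[
e_{\psi} t_{\frakp}
= \frac{1-\psi(F_{\frakp}^{-1})}{|G_{0,\frakp}|+1-\psi(F_{\frakp}^{-1})} \cdot e_{\psi}.
\]
The factor $|G_{0,\frakp}|+1-\psi(F_{\frakp}^{-1})$ appearing in $e_{\psi}h_{\frakp}$ cancels the same factor in the denominator of $e_{\psi}t_{\frakp}$, leaving $e_{\psi}v_{\frakp} t_{\frakp}^{-1} h_{\frakp}^{-1} = (1-\psi(F_{\frakp}^{-1}))^{-1} e_{\psi} = (e_{\psi}(1-F_{\frakp}^{-1}))^{-1}$. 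The only place any care is needed is in noticing this cancellation in Case 2, together with the residue-degree identity $\log \mathrm{N}\Frakp = f(\Frakp/\frakp)\log \mathrm{N}\frakp$ in Case 1; the rest is purely formal manipulation of the three orthogonal components.
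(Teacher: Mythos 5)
Your proof is correct and follows essentially the same case-by-case approach as the paper's: apply $e_{\psi}$ to $v_{\frakp}$, $t_{\frakp}$, $h_{\frakp}$ using the orthogonal idempotent decomposition and then simplify. The only cosmetic difference is that in Case 2 the paper leaves $e_{\psi}t_{\frakp} = e_{\psi}h_{\frakp}^{-1}(1-F_{\frakp}^{-1})$ unexpanded so that the cancellation against $h_{\frakp}^{-1}$ is immediate, whereas you evaluate $e_{\psi}g_{\frakp}$ explicitly via $\psi(F_{\frakp}^{-1})$ before cancelling; both are fine.
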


\begin{proof}
Suppose $\psi \in T_{1}(\frakp)$. Then
$e_{\psi}v_{\frakp} = e_{\psi}h|G_{\frakp}|$, $e_{\psi}t_{\frakp} = e_{\psi}h \log \mathrm{N} \Frakp$
and $e_{\psi}h_{\frakp} = e_{\psi} |G_{0,\frakp}|$. Hence
$e_{\psi}(v_{\frakp} t_{\frakp}^{-1}h_{\frakp}^{-1})
= e_{\psi} [G_{\frakp}:G_{0,\frakp}](\log \mathrm{N} \Frakp)^{-1}
= (e_{\psi}(\log \mathrm{N} \frakp))^{-1}$.
Suppose $\psi \in T_{2}(\frakp)$. Then
$e_{\psi}v_{\frakp} = e_{\psi}$ and $e_{\psi}t_{\frakp} = e_{\psi}h_{\frakp}^{-1}(1-F_{\frakp}^{-1})$. Hence $e_{\psi}(v_{\frakp} t_{\frakp}^{-1}h_{\frakp}^{-1}) = (e_{\psi} (1-F_{\frakp}^{-1}))^{-1}$.
If $\psi \in T_{3}(\frakp)$ then $e_{\psi}v_{\frakp}=e_{\psi}t_{\frakp}=e_{\psi}h_{\frakp} = e_{\psi}$
and so $e_{\psi}(v_{\frakp} t_{\frakp}^{-1}h_{\frakp}^{-1}) = e_{\psi}$.
\end{proof}

\begin{lemma}\label{L-func-local}
We have
$L_{\mathcal{S}'}^{*}(0)^{\#} \nr_{\R[G]} (\prod_{\frakp \in \mathcal{S}' \backslash \mathcal{S}_{\infty}}
v_{\frakp} t_{\frakp}^{-1} h_{\frakp}^{-1})= L^{*}(0)^{\#}$ in
$\zeta(\R[G])^{\times}$.
\end{lemma}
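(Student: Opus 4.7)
The plan is to compare the two sides one place at a time, exploiting the factorisation of equivariant $L$-functions into local Euler factors combined with the explicit local computation of Lemma \ref{chi-parts}. First, from the infinite product \eqref{eqn:L-func-induc-form} (valid for $\mathrm{Re}(s)>1$ and extended to leading values at $s=0$ by meromorphic continuation) we obtain
\[
L^{*}(0) = L^{*}_{\mathcal{S}'}(0) \cdot \prod_{\frakp \in \mathcal{S}' \setminus \mathcal{S}_{\infty}} i^{G}_{G_{\frakp}}\!\bigl(L^{*}_{K_{\Frakp}/k_{\frakp}}(0)\bigr).
\]
Applying $\#$ and using that it commutes with the induction maps $i^{G}_{G_{\frakp}}$, it suffices to prove that for every $\frakp \in \mathcal{S}' \setminus \mathcal{S}_{\infty}$,
\[
\nr_{\R[G_{\frakp}]}\bigl(v_{\frakp} t_{\frakp}^{-1} h_{\frakp}^{-1}\bigr) = L^{*}_{K_{\Frakp}/k_{\frakp}}(0)^{\#}
\quad \text{in } \zeta(\R[G_{\frakp}])^{\times+},
\]
since then taking $\nr_{\R[G]}$ of the whole product $\prod_{\frakp} v_{\frakp} t_{\frakp}^{-1} h_{\frakp}^{-1}$ and using the commutativity of diagram \eqref{eqn:K-diagram} with $H=G_{\frakp}$ (so that $\nr_{\R[G]}$ applied to an element of $\R[G_{\frakp}]^{\times}$ factors through $i^{G}_{G_{\frakp}} \circ \nr_{\R[G_{\frakp}]}$) gives the claim.

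To verify the local identity I would check it component by component, embedding $\zeta(\R[G_{\frakp}])^{\times+}$ into $\zeta(\C[G_{\frakp}])^{\times} = \prod_{\psi \in \mathrm{Irr}(G_{\frakp})} \C^{\times}$. Since $v_{\frakp} t_{\frakp}^{-1} h_{\frakp}^{-1}$ is central, Lemma \ref{chi-parts} tells us its $e_{\psi}$-scalar $\lambda_{\psi}$ directly; the reduced norm in the $\psi$-component is then $\lambda_{\psi}^{\psi(1)}$. On the other hand $L^{*}_{K_{\Frakp}/k_{\frakp}}(0,\bar{\psi})$ is computed from the definition \eqref{eqn:def-local-L}: for $\psi \in T_{1}(\frakp)$ it equals $(\log \mathrm{N}\frakp)^{-1}$ from the simple pole of $(1-\mathrm{N}\frakp^{-s})^{-1}$; for $\psi \in T_{2}(\frakp)$ (which forces $\psi$ linear), it equals $(1-\bar{\psi}(F_{\frakp}))^{-1} = (1-\psi(F_{\frakp}^{-1}))^{-1}$; and for $\psi \in T_{3}(\frakp)$ the inertia invariants vanish so the value is $1$. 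In each of the three cases the value matches $\lambda_{\psi}^{\psi(1)}$ on the nose (noting $\psi(1)=1$ in types $T_{1}$ and $T_{2}$, while $\lambda_{\psi}=1$ in type $T_{3}$).

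The main conceptual obstacle is bookkeeping rather than mathematics: one must keep track of the $\#$-involution (which swaps $\psi \leftrightarrow \bar{\psi}$ and in particular sends $1-F_{\frakp}^{-1}$ to something compatible with the Frobenius eigenvalue occurring in the local $L$-factor), verify that reduced norms in the larger algebra $\R[G]$ of elements supported in $\R[G_{\frakp}]$ agree with the induction of local reduced norms, and check the computation at the trivial character where the local Euler factor has a pole so the leading term involves $\log \mathrm{N}\frakp$. All three compatibilities are standard: the first is the definition of $\#$ together with its compatibility with induction, the second is diagram \eqref{eqn:K-diagram}, and the third matches the factor $h\log \mathrm{N}\Frakp \cdot \bar{e}_{\frakp}$ appearing in $t_{\frakp}$ (the factor $h|G_{\frakp}|$ in $v_{\frakp}$ cancels $|G_{0,\frakp}|$ in $h_{\frakp}$ against the index $[G_{\frakp}:G_{0,\frakp}]$ to leave precisely $1/\log \mathrm{N}\frakp$).
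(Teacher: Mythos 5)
Your proof is correct and follows essentially the same route as the paper: factor via the Euler product \eqref{eqn:L-func-induc-form}, reduce to a local identity using compatibility of $\#$ and of reduced norms with induction (diagram \eqref{eqn:K-diagram}), then verify the local identity component-wise from Lemma \ref{chi-parts} and the explicit local $L$-factor \eqref{eqn:def-local-L}. The only cosmetic difference is that you write the $\psi$-component of $\nr_{\R[G_{\frakp}]}$ as $\lambda_{\psi}^{\psi(1)}$ and then observe the exponent is harmless, whereas the paper notes directly that $\nr_{\R[G_{\frakp}]}$ fixes $v_{\frakp}t_{\frakp}^{-1}h_{\frakp}^{-1}$ because $\psi$ is linear on $T_1\cup T_2$ and the $T_3$-scalar is $1$.
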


\begin{proof}
First recall the definitions from \S \ref{sec:complex-group-alg} and \S \ref{sec:L-functs}.
From \eqref{eqn:L-func-induc-form}, we have
\[
L^{*}(0)^{\#}(L_{\mathcal{S}'}^{*}(0)^{\#})^{-1}
= (L^{*}(0)L_{\mathcal{S}'}^{*}(0)^{-1})^{\#}
= \prod_{\frakp \in \mathcal{S}' \backslash \mathcal{S}_{\infty}}
i_{G_{\frakp}}^{G}(L_{K_{\Frakp}/k_{\frakp}}^{*}(0))^{\#}
\quad \textrm{in} \quad \zeta(\R[G])^{\times}.
\]
Since $\#$ is compatible with induction, we are hence reduced to showing that
\[
\prod_{\frakp \in \mathcal{S}' \backslash \mathcal{S}_{\infty}} \nr_{\R[G]}
(v_{\frakp} t_{\frakp}^{-1} h_{\frakp}^{-1} )
= \prod_{\frakp \in \mathcal{S}' \backslash \mathcal{S}_{\infty}}
i_{G_{\frakp}}^{G}(L_{K_{\Frakp}/k_{\frakp}}^{*}(0)^{\#})
\quad \textrm{in} \quad \zeta(\R[G])^{\times +}.
\]
Fix $\frakp \in \mathcal{S}' \backslash \mathcal{S}_{\infty}$.
We have $\nr_{\R[G]}( v_{\frakp} t_{\frakp}^{-1}h_{\frakp})
= i_{G_{\frakp}}^{G}(\nr_{\R[G_{\frakp}]}( v_{\frakp} t_{\frakp}^{-1}h_{\frakp}))$
from the commutative diagram \eqref{eqn:K-diagram} (with $H=G_{\frakp}$).
Moreover, Lemma \ref{chi-parts} shows that
$\nr_{\R[G_{\frakp}]}(v_{\frakp} t_{\frakp}^{-1}h_{\frakp}^{-1}) = v_{\frakp} t_{\frakp}^{-1}h_{\frakp}^{-1}$ since $\psi$ is linear if $\psi \in T_{1}(\frakp) \cup T_{2}(\frakp)$. Therefore
we are further reduced to verifying that
\[
v_{\frakp} t_{\frakp}^{-1} h_{\frakp}^{-1}
= L_{K_{\Frakp}/k_{\frakp}}^{*}(0)^{\#} \quad \textrm{in} \quad \zeta(\R[G_{\frakp}])^{\times +}.
\]
However, this holds because for each $\psi \in \mathrm{Irr}(G_{\frakp})$ we have
\[
e_{\psi} L_{K_{\Frakp}/k_{\frakp}}^{*}(0)^{\#}
= e_{\psi} L_{K_{\Frakp}/k_{\frakp}}^{*}(\bar{\psi}, 0)
= e_{\psi}(v_{\frakp} t_{\frakp}^{-1}h_{\frakp}^{-1}),
\]
where the second equality follows from Lemma \ref{chi-parts}
and a direct computation using the definition of local $L$-function \eqref{eqn:def-local-L}.
\end{proof}

\begin{definition}
We say that a character $\chi$ of $G$ is \emph{odd} if $j$
(the unique complex conjugation in $G$) acts as $-1$ on a $\C[G]$-module $V_{\chi}$
with character $\chi$ or, equivalently, $e_{\chi}e_{-} = e_{\chi}$ in $\C[G]$ where
$e_{-} := \frac{1-j}{2}$.
\end{definition}

\begin{prop}\label{prop:explicit-eqn}
Assume that ETNC holds for the motive $h^{0}(K)$ with coefficients in $R$.
Let $\hg := \prod_{\frakp \in \mathcal{S}' \backslash \mathcal{S}_{\infty}} h_{\frakp}$
(as in  \cite[\S 8]{MR2371374}).
Then $\hat{\partial}(L(0)^{\#} \nr_{\R[G]}(\hg))^{-} = X_{\infty}^{-}$ in $K_{0}(R, \R[G]^{-})$.
\end{prop}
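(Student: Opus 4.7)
My plan is to combine the ETNC hypothesis with Proposition \ref{prop:X-eqn} and the explicit formulas of Lemmas \ref{lem:vp}, \ref{tp}, and \ref{L-func-local}, essentially chaining them together at the level of $K_0(R, \R[G]^{-})$.

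First, I would unpack the ETNC hypothesis. The conjecture asserts $T\Omega(K/k,0)^{-} = 0$, and since $\psi_G^*$ is an involution (hence an automorphism of $K_0(R,\R[G]^-)$), this is equivalent to
\[
\hat{\partial}(L_{\mathcal{S}'}^{*}(0)^{\#})^{-} = \euler(E)^{-} = X_{S'}^{-} \quad \text{in } K_0(R, \R[G]^{-}),
\]
where $E$ is the middle complex of the Tate sequence for $S'$, matching the definition of $X_{S'}$ in \S\ref{sec:metrised-diagrams}. By Proposition \ref{prop:X-eqn} this rearranges to
\[
X_{\infty}^{-} = \hat{\partial}(L_{\mathcal{S}'}^{*}(0)^{\#})^{-} + X_{C}^{-} - X_{2}^{-}.
\]

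Next, I would identify the second and third terms. Lemmas \ref{lem:vp} and \ref{tp} give
\[
X_{C}^{-} - X_{2}^{-} = \sum_{\frakp \in \mathcal{S}' \setminus \mathcal{S}_{\infty}} \partial\bigl(\nr_{\R[G]}(v_{\frakp} t_{\frakp}^{-1})\bigr)^{-} = \partial\Bigl(\nr_{\R[G]}\bigl(\textstyle\prod_{\frakp} v_{\frakp} t_{\frakp}^{-1}\bigr)\Bigr)^{-},
\]
where I use that $\partial$ and $\nr_{\R[G]}$ are group homomorphisms. Since $\hat{\partial}$ is itself a homomorphism on $\zeta(\R[G])^{\times}$ that restricts to $\partial$ on $\zeta(\R[G])^{\times +}$, the previous display combines to
\[
X_{\infty}^{-} = \hat{\partial}\Bigl(L_{\mathcal{S}'}^{*}(0)^{\#} \cdot \nr_{\R[G]}\bigl(\textstyle\prod_{\frakp} v_{\frakp} t_{\frakp}^{-1}\bigr)\Bigr)^{-}.
\]

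Finally, Lemma \ref{L-func-local} states $L_{\mathcal{S}'}^{*}(0)^{\#} \nr_{\R[G]}(\prod_{\frakp} v_{\frakp} t_{\frakp}^{-1} h_{\frakp}^{-1}) = L^{*}(0)^{\#}$; multiplying both sides by $\nr_{\R[G]}(\hg) = \nr_{\R[G]}(\prod_{\frakp} h_{\frakp})$ and substituting into the displayed equation yields
\[
X_{\infty}^{-} = \hat{\partial}\bigl(L^{*}(0)^{\#} \nr_{\R[G]}(\hg)\bigr)^{-},
\]
which is the desired identity (with $L(0)$ read as the leading term $L^{*}(0)$).

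The main obstacle is bookkeeping rather than substance: one must verify that the passage from the ETNC formulation to the identity $\hat{\partial}(L_{\mathcal{S}'}^{*}(0)^{\#})^{-} = X_{S'}^{-}$ is legitimate (the involution $\psi_G^*$ is invisible once one equates to zero), and one must track carefully that $\hat{\partial}$, although only defined on the full unit group and not strictly the image of the reduced norm, is still a homomorphism so that the product $L_{\mathcal{S}'}^{*}(0)^{\#} \cdot \nr_{\R[G]}(\prod v_{\frakp} t_{\frakp}^{-1})$ can be split. Everything else is a routine application of the lemmas in the preceding section.
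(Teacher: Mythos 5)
Your proof follows the same route as the paper: interpret ETNC as $\hat{\partial}(L_{\mathcal{S}'}^{*}(0)^{\#})^{-} = X_{S'}^{-}$, plug this into Proposition \ref{prop:X-eqn}, evaluate $X_C^{-}-X_2^{-}$ via Lemmas \ref{lem:vp} and \ref{tp}, and then use Lemma \ref{L-func-local} to trade the $\mathcal{S}'$-truncated $L$-function for the untruncated one. The one genuine gap is your final parenthetical ``with $L(0)$ read as the leading term $L^{*}(0)$'': these are a priori different objects (one is a value, the other a leading coefficient), and the proposition as stated really uses $L(0)$. What is needed, and what the paper actually supplies, is the observation that $r_{\mathcal{S}_{\infty}}(\chi)=0$ for every odd irreducible $\chi$ (via the order-of-vanishing formula \eqref{eqn:order-of-vanishing}), so that $L(0,\chi)=L^{*}(0,\chi)$ for odd $\chi$ and hence $L(0)^{-}=L^{*}(0)^{-}$. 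Without that step your chain ends at $\hat{\partial}(L^{*}(0)^{\#}\nr_{\R[G]}(\hg))^{-}=X_{\infty}^{-}$ and it is not yet the claimed identity; with it, the two agree and the argument matches the paper's.
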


\begin{proof}
The ETNC for $h^{0}(K)$ with coefficients in $\Z[G]$ gives
$\hat{\partial}(L_{\mathcal{S}'}^{*}(0)^{\#}) = X_{S'}$
(recall the exposition in \S \ref{sec:tate-ETNC} and note that $X_{S'}=\euler(E)$).
Hence the ETNC for $h^{0}(K)$ with coefficients in $R$ gives
$\hat{\partial}(L_{\mathcal{S}'}^{*}(0)^{\#})^{-} = X_{S'}^{-}$.
Let
\[
f := L_{\mathcal{S}'}^{*}(0)^{\#}
\textstyle{\prod_{\frakp \in \mathcal{S}' \backslash \mathcal{S}_{\infty}}}
\nr_{\R[G]}(v_{\frakp} t_{\frakp}^{-1}) \in \zeta(\R[G])^{\times}.
\]
Then combining Lemmas \ref{lem:vp} and \ref{tp} with Proposition \ref{prop:X-eqn}
gives $\hat{\partial}(f)^{-}=X_{\infty}^{-}$. However, from Lemma \ref{L-func-local} we deduce that
\[
f=L^{*}(0)^{\#} \nr_{\R[G]}
( \textstyle{\prod_{\frakp \in \mathcal{S}' \backslash \mathcal{S}_{\infty}}} v_{\frakp}^{-1} t_{\frakp} h_{\frakp})
\textstyle{\prod_{\frakp \in \mathcal{S}' \backslash \mathcal{S}_{\infty}}} \nr_{\R[G]}(v_{\frakp} t_{\frakp}^{-1})
= L^{*}(0)^{\#} \nr_{\R[G]}(\hg)
\]
in the minus part. A standard argument shows that $L(0,\chi) = L^{*}(0,\chi)$ for
every odd irreducible character $\chi$ of $G$ (this is a straightforward exercise
once one has the order of vanishing formula \eqref{eqn:order-of-vanishing} used in
\S \ref{sec:main-results}) and so we have $L^{*}(0)^{-}=L(0)^{-}$.
Therefore $f=L(0)^{\#} \nr_{\R[G]}(\hg)$ in the minus part and
the desired result now follows by applying $\hat{\partial}$ to both sides.
\end{proof}

\section{Computing Fitting ideals}

\begin{prop}\label{prop:fit-comp}
Let $\chi \in \mathrm{Irr}(G)$ with $\chi$ odd.
Then ignoring $2$-parts, we have
\[
\nr_{e_{\chi}E[G]}(e_{\chi} \hgi) U_{\chi}
\frac{\fit(H^{2}(G, \mu_{K}[\chi]))}{\fit(H^{1}(G, \mu_{K}[\chi]))} \fit( (\nabla/\delta(C))[\chi]_{G}) \subseteq \fit(\cl_{K}[\chi]^{G}).
\]
\end{prop}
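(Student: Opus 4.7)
The plan is to translate the structural information contained in the top row of $(\mathrm{D2})^-$ into a Fitting ideal inclusion via the $\chi$-twist formalism. Since $k$ is totally real and $K$ is CM, the minus part $E_{S_\infty}^-$ is canonically isomorphic to $\mu_K^-$ up to $2$-parts, so this row yields the four-term exact sequence of $R[G]$-modules
\[
0\longrightarrow\mu_K^-\longrightarrow A^- \longrightarrow \tilde B^-\longrightarrow (\nabla/\delta(C))^-\longrightarrow 0,
\]
in which $A^-$ is $G$-cohomologically trivial.

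First I would apply the exact functor $M\mapsto M[\chi]$ (noting that $T_\chi$ is $\Z$-flat) and split the sequence into two short exact sequences through the image $M:=\mathrm{im}(A^-\to\tilde B^-)$. Chasing the resulting long exact $G$-cohomology sequences, and using that $A[\chi]^-$ is $G$-c.t., gives (modulo $2$-parts) dimension-shifting isomorphisms $H^{i+1}(G,\mu_K[\chi])\cong H^i(G,M[\chi])$ for $i\geq 1$ together with a four-term exact sequence in low degrees relating $\mu_K[\chi]^G$, the $G$-invariants of $A[\chi]^-$ and $\tilde B[\chi]^-$, and $H^1(G,\mu_K[\chi])$. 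Taking Fitting ideals of these sequences, and using $t((\nabla/\delta(C))^-,\chi)$ to pass between invariants and coinvariants on the cokernel side, produces the correction ratio $\fit(H^2(G,\mu_K[\chi]))/\fit(H^1(G,\mu_K[\chi]))$ together with $\fit((\nabla/\delta(C))[\chi]_G)$.

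Next I would identify the algebraic content of the structure maps at ramified primes. In the Ritter--Weiss construction of the Tate sequence for small $S$, the module $\tilde B$ differs from a projective cover of $\nabla/\delta(C)$ by local factors governed by $U_\frakp$ at each $\frakp\in\mathcal{S}_{\mathrm{ram}}(K/k)$, so the discrepancy contributes the factor $U_\chi$ after taking reduced norms in the $\chi$-component. Similarly, the composition $A^-\to\tilde B^-$ factors locally through multiplication by $h_\frakp$ (the same element appearing in Lemmas~\ref{tp} and \ref{L-func-local}), whose reduced-norm $\chi$-contribution is $\nr_{e_\chi E[G]}(e_\chi\hg)$; this accounts for the factor $\nr_{e_\chi E[G]}(e_\chi\hgi)$ after rearrangement. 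Combining these with the natural map $\cl_K[\chi]^G\hookrightarrow(\nabla/\delta(C))[\chi]^G$, obtained from the Ritter--Weiss short exact sequence $0\to\cl_K\to\nabla\to\bar\nabla\to 0$ for $S=S_\infty$ together with $\delta(C)\cap\cl_K=0$, then yields the claimed containment.

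The main obstacle is the second step: verifying that the Ritter--Weiss construction produces $\tilde B$ and the map $A^-\to\tilde B^-$ with the asserted local descriptions at ramified primes. This requires a careful reading of the local structure of the small-$S$ Tate sequence, and is the source of the $U_\chi$ and $\nr_{e_\chi E[G]}(e_\chi\hgi)$ factors in the conclusion.
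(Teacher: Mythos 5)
Your plan correctly identifies one piece of the mechanism: the top row of $(\mathrm{D2})^-$, namely $0\to\mu_K^-\to A^-\to\tilde{B}^-\to(\nabla/\delta(C))^-\to 0$ with $A^-$ and $\tilde{B}^-$ both $R$-c.t., dimension-shifts $\widehat{H}^{0}$ and $\widehat{H}^{-1}$ of $(\nabla/\delta(C))[\chi]$ to $H^{2}$ and $H^{1}$ of $\mu_K[\chi]$, and this (together with the homomorphism $t(\cdot,\chi)$ relating coinvariants to invariants) is indeed how the ratio $\fit(H^2(G,\mu_K[\chi]))/\fit(H^1(G,\mu_K[\chi]))$ enters. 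But the place you look for the factors $U_\chi$ and $\nr_{e_\chi E[G]}(e_\chi\hgi)$ is wrong, and the mechanism you propose for extracting them would not go through.

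These factors do \emph{not} encode any local structure of $\tilde{B}$ or of the map $A^-\to\tilde{B}^-$; the Ritter--Weiss construction does not equip those objects with the description you posit, and $h_\frakp$ in particular never enters via a factorisation of $A^-\to\tilde{B}^-$. They come entirely from the explicit shape of $\overline{\nabla}$: one has $\overline{\nabla}^- = \bigoplus_{\frakp\in\mathcal{S}'\backslash\mathcal{S}_{\infty}}(\ind W^0_\frakp)^-$ with $W^0_\frakp = \delta(x_\frakp)\cdot\langle 1,\,g_\frakp^{-1}\Norm_{G_{0,\frakp}}\rangle_{\Z[G_\frakp]}$, and the key identity (Greither's Lemma~8.3, valid verbatim in the non-abelian case because the relevant elements are central in $\Q[G_\frakp]$) is $\langle 1,\,g_\frakp^{-1}\Norm_{G_{0,\frakp}}\rangle_{\Z[G_\frakp]} = h_\frakp^{-1}U_\frakp$. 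The paper therefore chooses a natural number $x$ with $x\nabla^-\subset\delta(C)^-$ and works with the four-term exact sequence
\[
0\longrightarrow\cl_K^-\longrightarrow\frac{\nabla^-}{\delta(C)^-}\longrightarrow\frac{x^{-1}\delta(C)^-}{\delta(C)^-}\longrightarrow\frac{x^{-1}\delta(C)^-}{\overline{\nabla}^-}\longrightarrow 0,
\]
whose middle module is $R$-c.t.; applying $[\chi]$, comparing the images of the invariants/coinvariants maps on that middle term, and computing $\fit$ of the two right-hand modules (the last by feeding the $h_\frakp^{-1}U_\frakp$ description of $\overline{\nabla}^-$ through reduced norms over the maximal order $e_\chi\mathfrak{M}$, localised prime by prime) is what produces $\nr_{e_\chi E[G]}(e_\chi\hgi)U_\chi$ and the containment against $\fit(\cl_K[\chi]^G)$. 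The top row of $(\mathrm{D2})^-$ cannot supply these factors: it relates $\mu_K$ to $\nabla/\delta(C)$ as a black box and carries no accessible local data at ramified primes. Your correct observation that $\cl_K^-$ injects into $(\nabla/\delta(C))^-$ is the starting point, but the quantitative control of the quotient --- which is where all the arithmetic lives --- has to go through $\overline{\nabla}^-$, not through $\tilde{B}$.
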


\begin{remark}
If $\mu_{K}^{-}$ is $R$-c.t.\ then $(\nabla/\delta(C))^{-}$ is also $R$-c.t.\ and
our argument shows that
\[
\nr_{e_{\chi}E[G]}(e_{\chi} \hgi) U_{\chi} \fit( (\nabla/\delta(C))[\chi]_{G}) = \fit(\cl_{K}[\chi]^{G}).
\]
\end{remark}

\begin{proof}
We shall abuse notation by systematically ignoring $2$-parts.
We begin by following the proof of \cite[Lemma 8.2]{MR2371374}.
Recall that the map $\delta: C \rightarrow \nabla$ is injective (see \cite[\S 6]{MR2371374}) and that there is the crucial short exact sequence
$0 \rightarrow \cl_{K} \rightarrow \nabla \rightarrow \bar{\nabla} \rightarrow 0$.
By abuse of notation we also use $\delta$ for the map $C \rightarrow \bar{\nabla}$
and note that this is still injective since $C$ is free and thus torsion-free.
Since $(\nabla/\delta(C))^{-}$ is finite, we may choose a natural number $x$ such that
$x \nabla^{-} \subset \delta(C)^{-}$. Therefore we have two short exact sequences
\[
0 \rightarrow \cl_{K}^{-} \rightarrow \frac{\nabla^{-}}{\delta(C)^{-}} \rightarrow
\frac{\bar{\nabla}^{-}}{\delta(C)^{-}} \rightarrow 0, \qquad
0 \rightarrow \frac{\bar{\nabla}^{-}}{\delta(C)^{-}} \rightarrow
\frac{x^{-1}\delta(C)^{-}}{\delta(C)^{-}}
\rightarrow \frac{x^{-1}\delta(C)^{-}}{\bar{\nabla}^{-}} \rightarrow 0.
\]
These combine into the four term exact sequence
\[
0 \longrightarrow \cl_{K}^{-} \longrightarrow M_{1} \longrightarrow M_{2}
\longrightarrow M_{3} \longrightarrow 0,
\]
where
\[
M_{1} := \frac{\nabla^{-}}{\delta(C)^{-}}, \qquad
M_{2} := \frac{x^{-1}\delta(C)^{-}}{\delta(C)^{-}} , \qquad
M_{3} := \frac{x^{-1}\delta(C)^{-}}{\bar{\nabla}^{-}}.
\]
The functor $M \mapsto M[\chi] := T_{\chi} \otimes_{\Z} M$ is exact as $T_{\chi}$ is free
over $\Z$ and so we obtain the exact sequence
\[
0 \longrightarrow \cl_{K}[\chi] \longrightarrow M_{1}[\chi] \longrightarrow M_{2}[\chi]
\longrightarrow M_{3}[\chi] \longrightarrow 0.
\]
Since $x$ is a natural number and $\delta(C)^{-}$ is free, $M_{2}$ is of projective dimension $1$
over $R$ and so is $R$-c.t.\ Hence $M_{2}[\chi]$ is also $R$-c.t.\ and so we have the following commutative
diagram
\[
\xymatrix@1@!0@=36pt {
0 \ar@{->}[rr] & & \cl_{K}[\chi]^{G} \ar@{->}[rr] & &  M_{1}[\chi]^{G} \ar@{->}[rr]^{\alpha} & & M_{2}[\chi]^{G} \\
& & & &  M_{1}[\chi]_{G} \ar@{->}[u] \ar@{->}[rr]^{\beta} & & M_{2}[\chi]_{G} \ar@{->}[u]^{\simeq} \ar@{->}[rr] & &
M_{3} [\chi]_{G} \ar@{->}[rr] & & 0,
}
\]
where the rows are exact and the vertical maps are induced by $\Norm_{G}$.
If we identify $M_{2}[\chi]_{G}$ with $M_{2}[\chi]^{G}$ then $\im(\beta) \subseteq \im(\alpha)$.
Therefore we have
\begin{equation}\label{fit-contain}
\begin{array}{lll}
\fit(\cl_{K}[\chi]^{G}) &=& \fit(M_{1}[\chi]^{G}) \fit(\im(\alpha))^{-1} \\
& \supseteq &  \fit(M_{1}[\chi]^{G}) \fit(\im(\beta))^{-1} \\
& = & \fit(M_{1}[\chi]^{G})\fit(M_{2}[\chi]_{G})^{-1}\fit(M_{3}[\chi]_{G}).
\end{array}
\end{equation}
We now compute the two rightmost terms explicitly.
Let $n = |\mathcal{S}' \backslash \mathcal{S}_{\infty}|$. Then
\[
M_{2}[\chi]_{G} = T_{\chi} \otimes_{R} \frac{x^{-1}\delta(C)^{-}}{\delta(C)^{-}}
\cong T_{\chi} \otimes_{R} \left( \frac{x^{-1}R^{n}}{R^{n}} \right) \cong \frac{x^{-1}T_{\chi}^{n}}{T_{\chi}^{n}}
\cong  \frac{T_{\chi}^{n}}{xT_{\chi}^{n}},
\]
and so recalling that $T_{\chi}$ is locally free of rank $\chi(1)$ over $\mathcal{O}$ gives
\begin{equation}\label{M2-fit}
\fit(M_{2}[\chi]_{G}) = x^{n \chi(1)}\mathcal{O}.
\end{equation}
In \cite[bottom of p.1419]{MR2371374} it is noted that
\begin{equation}\label{nabla-minus}
\bar{\nabla}^{-} = \bigoplus_{\frakp \in \mathcal{S}' \backslash \mathcal{S}_{\infty}}
\left( \ind W^{0}_{\frakp} \right)^{-}.
\end{equation}
Recall from \cite[proof of Lemma 6.1]{MR2371374} that
$g_{\frakp} := |G_{0,\frakp}| + 1 - F_{\frakp}^{-1}$ maps to a non-zero divisor $\bar{g}$
of $\Z[G_{\frakp}/G_{0, \frakp}]$, and $g_{\frakp}^{-1}$ stands for any lift of $\bar{g}^{-1}$
to $\Q[G_{\frakp}]$. This uniquely defines the element $g_{\frakp}^{-1} \Norm_{G_{0,\frakp}}$
(note this is central in $\Q[G_{\frakp}]$). From \cite[top of p.1420]{MR2371374} we have
\begin{equation}\label{W-0}
W^{0}_{\frakp} =
\delta(x_{\frakp}) \cdot \langle 1, g_{\frakp}^{-1} \Norm_{G_{0,\frakp}} \rangle_{\Z[G_{\frakp}]}.
\end{equation}
Now recall that $h_{\frakp} := e'_{\frakp}g_{\frakp} + e_{\frakp}''$
(again this is central in $\Q[G_{\frakp}]$). In \cite[Lemma 8.3]{MR2371374}, it is shown that
we have
\begin{equation}\label{hpUp}
 \langle 1, g_{\frakp}^{-1} \Norm_{G_{0,\frakp}} \rangle_{\Z[G_{\frakp}]}
 = h_{\frakp}^{-1} \langle \Norm_{G_{0, \frakp}}, 1-e_{\frakp}'F_{\frakp}^{-1} \rangle_{\Z[G_{\frakp}]}
 = h_{\frakp}^{-1} U_{\frakp},
\end{equation}
(the proof is valid for the non-abelian case as the relevant elements are central in
$\Q[G_{\frakp}]$). Combining equations \eqref{nabla-minus}, \eqref{W-0} and \eqref{hpUp},
gives
\[
\bar{\nabla}^{-}
= \bigoplus_{\frakp \in \mathcal{S}' \backslash \mathcal{S}_{\infty}}
\left( \delta(x_{\frakp}) \ind h_{\frakp}^{-1}U_{\frakp} \right)^{-}
= \bigoplus_{\frakp \in \mathcal{S}' \backslash \mathcal{S}_{\infty}}
\left( \delta(x_{\frakp}) \Z[G] (h_{\frakp}^{-1}U_{\frakp}) \right)^{-},
\]
where the second equality follows from the fact that $\bar{\nabla}$ is torsion-free.
Hence
\[
M_{3}
= \frac{x^{-1}\delta(C)^{-}}{\bar{\nabla}^{-}}
\cong \bigoplus_{\frakp \in \mathcal{S}' \backslash \mathcal{S}_{\infty}}
\frac{R}{xR(h_{\frakp}^{-1}U_{\frakp})},
\]
and so
\[
M_{3}[\chi]_{G} =  T_{\chi} \otimes_{R} M_{3}
\cong
\bigoplus_{\frakp \in \mathcal{S}' \backslash \mathcal{S}_{\infty}}
\frac{T_{\chi}}{xT_{\chi}(h_{\frakp}^{-1}U_{\frakp})}
=
\bigoplus_{\frakp \in \mathcal{S}' \backslash \mathcal{S}_{\infty}}
\frac{T_{\chi}}{xT_{\chi}(e_{\chi}\mathfrak{M})(h_{\frakp}^{-1}U_{\frakp})}.
\]
Recall that $\hg := \prod_{\frakp \in \mathcal{S'} \backslash \mathcal{S}_{\infty}} h_{\frakp}$
and
\[
U_{\chi} := \prod_{\frakp \in \mathcal{S}_{\mathrm{ram}}(K/k)}
\nr_{e_{\chi}E[G]}(e_{\chi}\mathfrak{M}U_{\frakp})\mathcal{O} =
\prod_{\frakp \in \mathcal{S'} \backslash \mathcal{S}_{\infty}}
\nr_{e_{\chi}E[G]}(e_{\chi}\mathfrak{M}U_{\frakp})\mathcal{O},
\]
where the second equality holds since $U_{\frakp}=\Z[G_{\frakp}]$,
and hence
$\nr_{e_{\chi}E[G]}(e_{\chi}\mathfrak{M}U_{\frakp})\mathcal{O} =
\mathcal{O}$, if $\frakp \notin \mathcal{S}_{\mathrm{ram}}(K/k)$.
Note that $\Lambda := e_{\chi}\mathfrak{M}$ is a maximal
$\mathcal{O}$-order in $e_{\chi}E[G]$; so if $\mathcal{O}'$ is the
localisation of $\mathcal{O}$ at any prime ideal then $\Lambda' :=
\mathcal{O}' \otimes_{\mathcal{O}} \Lambda$ is a maximal
$\mathcal{O}'$-order and every (left) $\Lambda'$-ideal is principal
(see \cite[Theorem 18.7(ii)]{MR1972204}). In particular, for each
$\frakp \in \mathcal{S'} \backslash \mathcal{S}_{\infty}$ there
exists an element $y_{\frakp}$ such that
$\Lambda' \otimes_{\Lambda} x(e_{\chi}\mathfrak{M})(h_{\frakp}^{-1}U_{\frakp}) 
= \Lambda' y_{\frakp}$ 
and therefore an exact sequence of $\mathcal{O}'$-modules of the form
\begin{equation}\label{eqn:local-ES}
\mathcal{O}' \otimes_{\mathcal{O}} T_\chi \stackrel{y_{\frakp}} \longrightarrow
\mathcal{O}' \otimes_{\mathcal{O}} T_\chi \longrightarrow
\mathcal{O}' \otimes_{\mathcal{O}} 
\frac{T_{\chi}}{xT_{\chi}(e_{\chi}\mathfrak{M})(h_{\frakp}^{-1}U_{\frakp})}
\longrightarrow 0.
\end{equation}
Now $\mathcal{O}'\otimes_\mathcal{O}T_{\chi}$ is free of rank
$\chi(1)$ over $\mathcal{O}'$. Thus, since $\nr_{e_{\chi}E[G]}$ is
the determinant map $e_{\chi}E[G] \cong \Mat_{\chi(1)}(E)
\rightarrow E$, the definition of Fitting ideal combines with \eqref{eqn:local-ES}
and our definition of the fractional ideal
$\nr_{e_{\chi}E[G]}(x(e_{\chi}\mathfrak{M})(h_{\frakp}^{-1}U_{\frakp}))\mathcal{O}$
to imply that
\begin{eqnarray*}
\mathrm{Fit}_{\mathcal{O}'}\left(\mathcal{O}'\otimes_\mathcal{O}\frac{T_{\chi}}{xT_{\chi}(e_{\chi}\mathfrak{M})(h_{\frakp}^{-1}U_{\frakp})}\right)
&=& \mathcal{O}' \cdot \det_{\mathcal{O}'}(y_{\frakp})\\
&=&
\mathcal{O}'\otimes_\mathcal{O}\nr_{e_{\chi}E[G]}(x(e_{\chi}\mathfrak{M})(h_{\frakp}^{-1}U_{\frakp}))\mathcal{O}.
\end{eqnarray*}
However, Fitting ideals over $\mathcal{O}$ can be computed by
localising and so, by taking the product over all primes $\frakp
\in \mathcal{S'} \backslash \mathcal{S}_{\infty}$, we therefore
have
\begin{equation}\label{M3-fit}
\fit(M_{3}[\chi]_{G})
= \prod_{\frakp \in \mathcal{S}' \backslash \mathcal{S}_{\infty}}
\nr_{e_{\chi}E[G]}(x(e_{\chi}\mathfrak{M})(h_{\frakp}^{-1}U_{\frakp}))\mathcal{O}\\
= x^{n\chi(1)} \nr_{e_{\chi}E[G]}(e_{\chi} \hgi) U_{\chi}.
\end{equation}
Substituting equations \eqref{M2-fit} and \eqref{M3-fit} into the containment \eqref{fit-contain} gives
\begin{equation}\label{eqn:G-up-contain}
\nr_{e_{\chi}E[G]}(e_{\chi}\hgi)  U_{\chi} \fit( M_{1}[\chi]^{G})
\subseteq  \fit(\cl_{K}[\chi]^{G}).
\end{equation}
We have an exact sequence
\begin{equation*}
0 \longrightarrow \widehat{H}^{-1}(G, M_{1}[\chi]) \longrightarrow M_{1}[\chi]_{G}
\longrightarrow M_{1}[\chi]^{G} \longrightarrow \widehat{H}^{0}(G, M_{1}[\chi]) \longrightarrow 0,
\end{equation*}
where $\widehat{H}^{i}(G, M)$ denotes Tate cohomology. Hence
\begin{equation}\label{eqn:herbrand-quot}
\fit(M_{1}[\chi]^{G})
= \fit(M_{1}[\chi]_{G}) \frac{\fit(\widehat{H}^{0}(G, M_{1}[\chi]))}{\fit(\widehat{H}^{-1}(G, M_{1}[\chi]))}.
\end{equation}
Recall that the top row of (D2)$^{-}$ is an exact sequence of f.g.\ $R$-modules
\[
0 \longrightarrow \mu_{K}^{-} \longrightarrow A^{-} \longrightarrow \tilde{B}^{-}
\longrightarrow M_{1} \longrightarrow 0,
\]
where $A^{-}$ and $\tilde{B}^{-}$ are $R$-c.t.\ Hence
\begin{equation}\label{eqn:co-shift}
\widehat{H}^{0}(G, M_{1}[\chi]) \cong {H}^{2}(G, \mu_{K}[\chi]) \quad \textrm{ and } \quad
\widehat{H}^{-1}(G, M_{1}[\chi]) \cong {H}^{1}(G, \mu_{K}[\chi]).
\end{equation}
Combining \eqref{eqn:G-up-contain}, \eqref{eqn:herbrand-quot} and \eqref{eqn:co-shift}
now gives the desired result.
\end{proof}

\section{Fitting ideals from refined Euler characteristics}\label{sec:fit-euler}

Let $I_{\mathcal{O}}$ denote the multiplicative group of invertible $\mathcal{O}$-modules in
$\C$. There exists a natural isomorphism
$\iota:K_{0}(\mathcal{O}, \C) \stackrel{\sim}\rightarrow I_{\mathcal{O}}$ with
$\iota((P,\tau, Q))
=\tilde{\tau}(\det_{\mathcal{O}}(P) \otimes_{\mathcal{O}} \det_{\mathcal{O}}(Q)^{-1})$
where $\tilde{\tau}$ is the isomorphism
\[
\C \otimes_{\mathcal{O}}
(\det_{\mathcal{O}}(P) \otimes_{\mathcal{O}} \det_{\mathcal{O}}(Q)^{-1})
\cong
\det_{\C}(\C \otimes_{\mathcal{O}} Q) \otimes_{\C}
\det_{\C}(\C \otimes_{\mathcal{O}} Q)^{-1}
\cong \C
\]
induced by $\tau$. Indeed, $\iota$ is induced by the exact sequence
\[
K_{1}(\mathcal{O}) \longrightarrow K_{1}(\C) \longrightarrow K_{0}(\mathcal{O}, \C)
\longrightarrow K_{0}(\mathcal{O}) \longrightarrow K_{0}(\C)
\]
and the canonical isomorphisms $K_{1}(\C) \stackrel{\sim} \rightarrow \C^{\times}$
and $K_{1}(\mathcal{O}) \stackrel{\sim} \rightarrow \mathcal{O}^{\times}$.
Under this identification, the boundary map
$\C^{\times} \cong K_{1}(\C) \rightarrow K_{0}(\mathcal{O}, \C) \cong I_{\mathcal{O}}$
simply sends $x$ to the lattice $x\mathcal{O}$.

In what follows, $\varphi_{\mathrm{triv}}$ denotes the only metrisation possible, namely the unique isomorphism from the complex vector space $0$ to itself.

\begin{lemma}\label{lemma:euler-to-fit}
If $0 \rightarrow U \rightarrow A \rightarrow B \rightarrow V \rightarrow 0$
is an exact sequence of f.g.\
$\mathcal{O}$-modules with $U$ and $V$ finite then
$\iota(\euler(A \rightarrow B, \varphi_{\mathrm{triv}})) = \fit(U)^{-1}\fit(V)$.
\end{lemma}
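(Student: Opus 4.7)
The plan is to decompose the two-term complex $C^\bullet := [A\to B]$ (concentrated in degrees $0$ and $1$) in the derived category of $\mathcal{O}$-modules and compute $\euler$ term by term using additivity and quasi-isomorphism invariance of refined Euler characteristics. Note first that, since $U$ and $V$ are torsion, the complexification $A\otimes_\mathcal{O}\C \to B\otimes_\mathcal{O}\C$ is an isomorphism; so the metrisation $\varphi_{\mathrm{triv}}: 0 \to 0$ on the cohomology of $C^\bullet$ is well-defined and its transpose, in the sense of \S\ref{sec:tate-ETNC}, is precisely this complexified isomorphism.

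Since $\mathcal{O}$ is a Dedekind domain, every finitely generated $\mathcal{O}$-module has projective dimension at most one; in particular $\mathrm{Ext}^i_\mathcal{O}(V,U) = 0$ for $i\geq 2$, and so $C^\bullet$ decomposes in the derived category as $U[0]\oplus V[-1]$. Combined with additivity of $\euler$ on distinguished triangles, this gives
\[
\euler(A\to B, \varphi_{\mathrm{triv}}) \;=\; \euler(U[0],\mathrm{triv}) + \euler(V[-1],\mathrm{triv})
\]
in $K_0(\mathcal{O},\C)$, where each summand is the refined Euler characteristic of a complex concentrated in a single cohomological degree.

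To compute the $V$-summand, I would choose a short projective resolution $0\to P^0\to P^1\to V\to 0$; the complex $[P^0\to P^1]$ placed in degrees $0$ and $1$ is a projective model of $V[-1]$, and its refined Euler characteristic with trivial metrisation is the class $(P^0,\psi,P^1)\in K_0(\mathcal{O},\C)$, where $\psi$ is the complexification of the inclusion. Unwinding the definition of $\iota$ and reducing locally to the cyclic case $V = \mathcal{O}/(a)$ via the structure theorem for modules over a Dedekind domain shows $\iota(\euler(V[-1])) = \fit(V)$. An analogous but degree-shifted computation gives $\iota(\euler(U[0])) = \fit(U)^{-1}$, the inverse arising precisely from the single-degree difference in cohomological placement (equivalently, the fact that shifting a class in $K_0(\mathcal{O},\C)$ by one degree inverts its image under $\iota$). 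Multiplying under the isomorphism $\iota$ then yields the asserted equality.

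The main technical obstacle is making the derived-category decomposition and the additivity of refined Euler characteristics rigorous in a form compatible with the paper's sign convention for $\euler$ (which differs from the usual one, as indicated in \S\ref{sec:tate-ETNC}); once this is done the remaining computations reduce to an explicit Smith-normal-form calculation on each cyclic component of $U$ and $V$.
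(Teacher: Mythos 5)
Your proposal is correct and follows essentially the same route as the paper: both produce the distinguished triangle relating $[A\to B]$ to $U[0]$ and $V[-1]$, invoke additivity of $\euler$ over that triangle (justified by $\mathcal{O}$ being Dedekind so that all terms are perfect), and reduce to computing $\iota(\euler(M[i],\varphi_{\mathrm{triv}}))$ for a single finite module $M$ via a length-one projective resolution, using the sign/shift relation to handle the degree difference between the $U$- and $V$-contributions. Your two small departures --- observing that the triangle actually splits because $\mathrm{Ext}^{2}_{\mathcal{O}}(V,U)=0$, and doing the final Fitting-ideal computation locally by Smith normal form rather than globally via top exterior powers of the resolution as the paper does --- are both valid and yield the same answer.
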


\begin{proof}
We have a distinguished triangle of perfect metrised complexes of $\mathcal{O}$-modules
\[
\mathcal{C}_{0} \longrightarrow \mathcal{C}_{1} \longrightarrow \mathcal{C}_{2}
\longrightarrow \mathcal{C}_{0}[1]
\]
with $\mathcal{C}_{0}$ the complex $U[0]$, $\mathcal{C}_{2}$ the complex $V[-1]$
and $\mathcal{C}_{1}$ the complex $A \rightarrow B$ with the first term placed in degree $0$.
(To see this, write $\widetilde{\mathcal{C}}_{2}$ for the complex $A/U \rightarrow B$ with first term placed in degree zero and map induced by $A \rightarrow B$:
then $\widetilde{\mathcal{C}}_{2}$ is naturally quasi-isomorphic to $\mathcal{C}_{2}$
and also lies in the obvious short exact sequence of complexes of the form
\[
0 \longrightarrow \mathcal{C}_{0} \longrightarrow \mathcal{C}_{1}
\longrightarrow \widetilde{\mathcal{C}}_{2} \longrightarrow 0 .)
\]
Furthermore, since $\mathcal{O}$ is a Dedekind domain
every f.g.\ $\mathcal{O}$-module is of projective dimension at most one.
Therefore the refined Euler characteristics in question are additive (see \cite[Theorem 2.8]{MR2076565}),
and so we are reduced to showing that for a finite $\mathcal{O}$-module $M$ we have
\[
\iota(\euler(M[i], \varphi_{\mathrm{triv}})) = \fit(M)^{(-1)^{i+1}}.
\]
However, $\euler(M[i], \varphi_{\mathrm{triv}}) = (-1)^{i}\euler(M[0], \varphi_{\mathrm{triv}})$
and so we are further reduced to considering the case $i=0$.
There exists an exact sequence of f.g.\ $\mathcal{O}$-modules
\[
0 \longrightarrow P \stackrel{d} \longrightarrow F \longrightarrow M \longrightarrow 0
\]
with $P$ projective and $F$ free of equal rank $r$. We fix an isomorphism $F \cong \mathcal{O}^{r}$
and hence an identification of $\det_{\mathcal{O}}(F) = \wedge_{\mathcal{O}}^{r}(F)$ with $\mathcal{O}$.
Under this identification, $\fit(M)$ is by definition the image of the homomorphism
$\wedge_{\mathcal{O}}^{r}(d):\wedge_{\mathcal{O}}^{r}(P) \rightarrow \wedge_{\mathcal{O}}^{r}(F)=\mathcal{O}$. Setting $\tau := \C \otimes_{\mathcal{O}} d$ and noting that $M[0]$ is quasi-isomorphic to the complex $P \rightarrow F$ with the second term placed in degree $0$, we therefore have
\begin{eqnarray*}
\iota(\euler(M[0], \varphi_{\mathrm{triv}})) &=&
\iota((F, \tau^{-1}, P)) = \iota((P, \tau, F))^{-1} =
\tilde{\tau}(\det_{\mathcal{O}}(P) \otimes_{\mathcal{O}} \det_{\mathcal{O}}(F)^{-1})^{-1}\\
&=&  \im(\wedge_{\mathcal{O}}^{r}(d))^{-1} = \fit(M)^{-1},
\end{eqnarray*}
as required.
\end{proof}

\section{Two annihilation lemmas}

Let $\chi$ be an irreducible character of a finite group $G$
and let $M$ be a $\Z[G]$-module.

\begin{lemma}\label{useful}
If $x \in \Ann_{\mathcal{O}}(M[\chi]^G)$,
then $x \cdot \pr_{\chi} \in \Ann_{\mathcal{O}[G]}(\mathcal{O} \otimes_{\Z} M)$.
\end{lemma}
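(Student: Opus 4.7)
The statement amounts to showing that for every $m\in M$,
$x\pr_{\chi}(1\otimes m)=\sum_{g\in G}x\,\chi(g^{-1})\otimes g(m)=0$
in $\mathcal{O}\otimes_{\Z}M$. This reduction suffices because $\mathcal{O}\otimes_{\Z}M$ is generated as an $\mathcal{O}$-module by the pure tensors $1\otimes m$, and $x\pr_{\chi}$ lies in the centre of $\mathcal{O}[G]$ (the element $\pr_\chi=\sum_{g}\chi(g^{-1})g$ has its coefficients constant on conjugacy classes since $\chi$ is a class function).

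The strategy is to compare the $G$-norm inside $M[\chi]=T_\chi\otimes_\Z M$ against a basis of $T_\chi$, so as to extract $\pr_\chi$ from the trace of the matrix representing the $G$-action. Since the desired identity lives in an $\mathcal{O}$-module, it may be verified after localising at each prime $\mathfrak{p}$ of $\mathcal{O}$; this localisation is the essential move, because $T_\chi=f_\chi\mathfrak{M}$ is only locally free of rank $d:=\chi(1)$ over the Dedekind ring $\mathcal{O}$. After localising I pick an $\mathcal{O}_\mathfrak{p}$-basis $t_1,\dots,t_d$ of $(T_\chi)_\mathfrak{p}$ and define $A(g)=(A_{ij}(g))\in\Mat_d(\mathcal{O}_\mathfrak{p})$ by $t_ig=\sum_j A_{ij}(g)\,t_j$; the fact that $E\otimes_\mathcal{O}T_\chi$ has character $\chi$ as a right $E[G]$-module translates into $\sum_i A_{ii}(g)=\chi(g)$ for all $g\in G$.

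For each $i$, the element $\Norm_G(t_i\otimes m)=\sum_g t_ig^{-1}\otimes g(m)$ lies in $M[\chi]^G$ and is therefore killed by $x$. Substituting $t_ig^{-1}=\sum_j A_{ij}(g^{-1})t_j$ and using the direct sum decomposition $(T_\chi)_\mathfrak{p}\otimes_\Z M=\bigoplus_{j=1}^d t_j\otimes(\mathcal{O}\otimes_\Z M)_\mathfrak{p}$ gives, for each pair $(i,j)$, the vanishing
$x\sum_{g\in G}A_{ij}(g^{-1})\otimes g(m)=0$ in $(\mathcal{O}\otimes_\Z M)_\mathfrak{p}$. Setting $j=i$ and summing over $i$ collapses the diagonal into $\sum_i A_{ii}(g^{-1})=\chi(g^{-1})$, producing $x\pr_\chi(1\otimes m)=0$ in $(\mathcal{O}\otimes_\Z M)_\mathfrak{p}$. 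Since this holds for every prime $\mathfrak{p}$, and every $\mathcal{O}$-module injects into the product of its localisations, the identity holds globally.

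The only real technical hurdle is that $T_\chi$ need not be globally free over $\mathcal{O}$, which forces one to localise before choosing a basis in order to write matrix coefficients and invoke the character identity; the remainder is a direct unwinding of the definitions of $M[\chi]$, of $\pr_\chi$, and of the character of a right $E[G]$-module. One can alternatively phrase the argument coordinate-free by noting that the $\mathcal{O}$-bilinear map $T_\chi\otimes_\mathcal{O}\Hom_\mathcal{O}(T_\chi,\mathcal{O})\to\mathcal{O}[G]$, $t\otimes\phi\mapsto\sum_g\phi(tg^{-1})\,g$, carries the canonical element corresponding to $\mathrm{id}_{T_\chi}$ to $\pr_\chi$, and then applying a functional $\phi\otimes\mathrm{id}_M$ to $\Norm_G(t\otimes m)$ converts the annihilation hypothesis directly into the desired identity — but verifying the formula for $\pr_\chi$ ultimately reduces to the same local computation.
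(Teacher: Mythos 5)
Your proof is correct and is essentially identical to the paper's: both localise at each prime of $\mathcal{O}$ to obtain a genuine basis of $T_\chi$, feed the norm elements $\Norm_G(t_i\otimes m)\in M[\chi]^G$ into the annihilation hypothesis, read off the vanishing of each matrix coefficient, and then take the trace to recover $\pr_\chi=\sum_g\chi(g^{-1})g$. The only differences are cosmetic (you write $A(g)$ and $\chi(g^{-1})$ where the paper writes $\rho_\chi(g)$ and $\overline{\chi}(g)$), and your coordinate-free reformulation at the end is an optional gloss on the same computation.
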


\begin{proof}
It suffices to show that the result holds after localising at $\frakp$ for all primes
$\frakp$ of $\mathcal{O}$. We set $n := \chi(1)$ and recall that $T_{\chi}$ is locally free
of rank $n$ over $\mathcal{O}$. In what follows, we abuse notation by omitting subscripts $\frakp$
(i.e. all $\mathcal{O}$-modules below are localised at $\frakp$).

We fix an $\mathcal{O}$-basis $\{t_i: 1 \le i \le n\}$ of $T_\chi$ and
write $\rho_{\chi}: G \to {\rm GL}_n(\mathcal{O})$ for the associated representation.
Then for each $m \in M$ and each index $i$ the element
$T_i(m) := \sum_{g \in G}g(t_i\otimes m)$
belongs to $M[\chi]^G = (T_{\chi} \otimes_{\Z} M)^{G}$. 
Now in 
$M[\chi] = T_{\chi} \otimes_{\Z} M = T_{\chi} \otimes_{\mathcal{O}} (\mathcal{O} \otimes_{\Z}M)$
we have
\begin{eqnarray*}
T_i(m)
&=& \sum_{g \in G} t_i g^{-1} \otimes g(m) 
= \sum_{g \in G} \sum_{j=1}^{j=n}\rho_{\chi}(g^{-1})_{ij}t_j \otimes g(m) \\
&=& \sum_{g \in G} \sum_{j=1}^{j=n}\rho_{\overline{\chi}}(g)_{ji}t_j\otimes g(m) 
= \sum_{j = 1}^{j=n}t_j\otimes \left(\sum_{g \in G}\rho_{\overline{\chi}}(g)_{ji} g(m)\right).
\end{eqnarray*}
However, $x$ annihilates $T_i(m) \in M[\chi]^G$ and $\{t_{j}: 1 \le j \le n\}$ is an $\mathcal{O}$-basis of $T_\chi$, so the above equation implies that
$x \cdot \sum_{g \in G} \rho_{\overline{\chi}}(g)_{ji}g(m) =0$ for all $i$ and $j$.
Hence each element
$c(x)_{ij} := x \cdot \sum_{g \in G}\rho_{\overline{\chi}}(g)_{ji}g$
belongs to $\Ann_{\mathcal{O}[G]}(\mathcal{O} \otimes_{\Z} M)$.
In particular, the element
\[
\sum_{i = 1}^{i = n}c(x)_{ii}
= \sum_{i = 1}^{i = n} x \cdot \sum_{g \in G} \rho_{\overline{\chi}} (g)_{ii}g
= x \cdot \sum_{g \in G} \left(\sum_{i = 1}^{i = n}\rho_{\overline{\chi}}(g)_{ii}\right)g
= x \cdot \sum_{g \in G}\overline{\chi}(g)g
= x \cdot \pr_{\chi}
\]
belongs to $\Ann_{\mathcal{O}[G]}(\mathcal{O} \otimes_{\Z} M)$, as required.
\end{proof}

\begin{lemma}\label{useful2}
If $x \in \mathcal{O}$ such that
$x \cdot \pr_{\chi} \in \Ann_{\mathcal{O}[G]}(\mathcal{O} \otimes_{\Z} M)$,
then for every $y \in \mathcal{D}_{E/\Q}^{-1}$ we have
$
\sum_{\omega \in \Gal(E/\Q)}y^{\omega} x^{\omega} \cdot \pr_{\chi^{\omega}} \in \Ann_{\Z [G]}(M).
$
\end{lemma}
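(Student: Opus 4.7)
The plan is to establish two facts about the element $\Theta := \sum_{\omega \in \Gal(E/\Q)} y^\omega x^\omega \cdot \pr_{\chi^\omega}$: that $\Theta$ lies in $\Z[G]$, and that $\Theta \cdot m = 0$ in $M$ for every $m \in M$. The first assertion is immediate: the coefficient of $g \in G$ in $\Theta$ equals $\sum_\omega (y x \chi(g^{-1}))^\omega = \tr_{E/\Q}(y x \chi(g^{-1}))$, and this lies in $\Z$ because $y \in \mathcal{D}_{E/\Q}^{-1}$ and $x\chi(g^{-1}) \in \mathcal{O}$, so their product lies in the inverse different and therefore has integral trace.

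The annihilation claim is where the only real subtlety lies. The naive approach of acting by $y^\omega x^\omega \pr_{\chi^\omega}$ on $1 \otimes m$ inside $E \otimes_\Z M$ and invoking the hypothesis term by term fails, because it only shows that the image of $\Theta m$ in $E \otimes_\Z M$ vanishes, i.e.\ that $\Theta m$ is $\Z$-torsion in $M$, which is insufficient. To get genuine equality in $M$ one must extract integer-coefficient relations from the hypothesis before bringing $y$ into the picture. I would fix a $\Z$-basis $e_1, \ldots, e_n$ of $\mathcal{O}$ (possible since $\mathcal{O}$ is $\Z$-free) and expand
\[
x \chi(g^{-1}) = \sum_{j=1}^n c_j(g) e_j, \qquad c_j(g) \in \Z.
\]
The hypothesis $x \pr_\chi \in \Ann_{\mathcal{O}[G]}(\mathcal{O} \otimes_\Z M)$, applied to $1 \otimes m$, then reads
\[
0 = \sum_g x \chi(g^{-1}) \otimes g(m) = \sum_{j=1}^n e_j \otimes \Big(\sum_g c_j(g) g(m)\Big)
\]
in $\mathcal{O} \otimes_\Z M$, and the direct sum decomposition $\mathcal{O} \otimes_\Z M = \bigoplus_j (e_j \otimes_\Z M)$ forces $\sum_g c_j(g) g(m) = 0$ in $M$ for each index $j$.

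To finish, I would write $\tr_{E/\Q}(y x \chi(g^{-1})) = \sum_j c_j(g) \tr_{E/\Q}(y e_j)$, observing that each scalar $\tr_{E/\Q}(y e_j)$ lies in $\Z$ because $y e_j \in \mathcal{D}_{E/\Q}^{-1}$. Swapping the order of summation then gives
\[
\Theta \cdot m = \sum_g \tr_{E/\Q}(y x \chi(g^{-1})) g(m) = \sum_{j=1}^n \tr_{E/\Q}(y e_j) \sum_g c_j(g) g(m) = 0,
\]
as required. The main conceptual step, which is not technically hard but easy to miss, is the realisation that the hypothesis should be unpacked coordinatewise with respect to a $\Z$-basis of $\mathcal{O}$; after this, the proof is just bookkeeping with the defining property of the inverse different.
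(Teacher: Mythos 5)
Your argument is correct and is essentially the paper's proof made explicit. The paper's proof quotes the identity $\mathcal{D}_{E/\Q}^{-1}\cdot\Ann_{\mathcal{O}[G]}(\mathcal{O}\otimes_\Z M)=\mathcal{D}_{E/\Q}^{-1}\otimes_\Z\Ann_{\Z[G]}(M)$ and concludes by applying $\tr_{E/\Q}$; your coordinatewise expansion with respect to a $\Z$-basis of $\mathcal{O}$ is precisely how one verifies that identity, and the final step in both is the containment $\tr_{E/\Q}(\mathcal{D}_{E/\Q}^{-1})\subseteq\Z$.
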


\begin{proof}
The hypotheses imply that $y x \cdot \pr_{\chi}$ belongs to
\[
\mathcal{D}_{E/\Q}^{-1}\cdot \Ann_{\mathcal{O}[G]}(\mathcal{O} \otimes_{\Z} M)
 = \mathcal{D}_{E/\Q}^{-1} \otimes_{\Z} \Ann_{\Z [G]}(M).
\]
The element
\[
\sum_{\omega \in \Gal(E/\Q)} y^{\omega} x^{\omega} \cdot \pr_{\chi^{\omega}} =
\sum_{\omega \in \Gal(E/\Q)}(y x \cdot \pr_{\chi} )^{\omega}
\]
therefore belongs to
$\tr_{E/\Q}(\mathcal{D}_{E/\Q}^{-1}) \otimes_{\Z} \Ann_{\Z[G]}(M) \subseteq \Ann_{\Z [G]}(M)$,
as required.
\end{proof}

\section{Proofs of the main results}\label{sec:main-results}

\begin{proof}[Proof of Theorem \ref{main-theorem}]
Let $\phi$ be the character of $\Gal(K/k)$ whose inflation to $G=\Gal(L/k)$ is $\chi$.
For each $x \in \cl_{L}$, we have
$\pr_{\chi}(x) = \pr_{\phi} (\Norm_{\Gal(L/K)}(x))$
and $\Norm_{\Gal(L/K)}(x) \in \cl_{K}$.
However, we also have $L(s, \chi) = L(s, \phi)$ (see \cite[\S 4.2]{MR782485}) and so 
we are therefore reduced to the case
$L=K$. Note that $\chi=\phi$ remains irreducible.

We now repeat a reduction argument given in \cite[top of p.71]{MR782485}.
The order of vanishing of
$L(s,\chi)=L_{\mathcal{S}_{\infty}}(s, \chi)$ at $s=0$ is given by
\begin{equation}\label{eqn:order-of-vanishing}
r_{\mathcal{S}_{\infty}}(\chi) =
 \sum_{v \in \mathcal{S}_{\infty}} \dim_{\C} V_{\chi}^{G_{v}} - \dim_{\C} V_{\chi}^{G},
\end{equation}
where $V_{\chi}$ is a $\C[G]$-module with character $\chi$
(see \cite[Chapter I, Proposition 3.4]{MR782485}).
If $r_{\mathcal{S}_{\infty}}(\chi)>0$ then $L(0, \chi)=0$ and so the result is trivial.
Hence we may suppose that $r_{\mathcal{S}_{\infty}}(\chi)=0$. Since $\chi$ is non-trivial, we have
$V_{\chi}^{G} = \{ 0 \}$ and so \eqref{eqn:order-of-vanishing} gives
$V_{\chi}^{G_{v}}= \{ 0 \}$ for each $v \in \mathcal{S}_{\infty}$.
In particular, $G_{v}$ is non-trivial for $v \in \mathcal{S}_{\infty}$ so
$k$ is totally real and $K$ is totally complex. Now $G_{v} = \{ 1, j_{w} \}$ for a complex place
$w$ of $K$ and $j_{w}$ acts as $-1$ on $V_{\chi}$ since $j_{w}^{2}=1$
and $V_{\chi}^{j_{w}} = \{ 0 \}$. Thus, since the representation $V_{\chi}$
is faithful, all the $j_{w}$ are equal to the same $j \in G$. Hence $K$ is a totally
imaginary quadratic extension of the totally real subfield $K^{\langle j \rangle}$,
i.e., $K$ is a CM field. Furthermore, $\chi$ is odd because $j$ acts as $-1$ on $V_{\chi}$.

For the rest of this proof, we abuse notation and only consider $p$-parts.
Recall from Proposition \ref{prop:explicit-eqn}
that under the assumption that ETNC holds for the motive $h^{0}(K)$ with coefficients
in $R$, we have
\begin{equation}\label{eqn:explicit}
\hat{\partial}(L(0)^{\#} \nr_{\R[G]}(\hg))^{-} = X_{\infty}^{-} \textrm{ in } K_{0}(R, \R[G]^{-}).
\end{equation}
As $\chi$ is odd, base change gives a natural homomorphism
\[
\mu_{\chi} : K_{0}(R, \R[G]^{-}) \rightarrow  K_{0}(\mathcal{O}, \C), \quad
(P,f,Q) \mapsto (T_{\chi} \otimes_{R} P, \mathrm{id} \otimes f, T_{\chi} \otimes_{R} Q)
= (P[\chi]_{G}, f_{\chi}, Q[\chi]_{G} ).
\]
Under condition (\textasteriskcentered) it can be shown that
the Strong Stark Conjecture at $p$ for $\chi$ follows from Wiles' proof
of the Main conjecture for totally real fields
(for details see \cite[Corollary 2, p.24]{nickel-ETNC}, for example).
Since the Strong Stark Conjecture can be interpreted
as `ETNC modulo torsion' and $K_{0}(\mathcal{O}, \C)$ is torsion-free,
the image under $\mu_{\chi}$ of equation \eqref{eqn:explicit} holds under
our hypotheses, i.e.,
\[
\mu_{\chi} (\hat{\partial}(L(0)^{\#} \nr_{\R[G]}(\hg))^{-})
= \mu_{\chi}(X_{\infty}^{-})
\textrm{ in } K_{0}(\mathcal{O}, \C).
\]
Since $\mu_{\chi}$ factors via $K_{0}(\Z[G], \C[G])$ and $K_{0}(\mathcal{O},\C)$ is torsion-free,
Lemma \ref{lemma:commutes-up-to-2} then gives
\begin{equation}\label{eqn:mu-chi-eq}
(\mathcal{O}, \nr_{e_{\chi}E[G]}(e_{\chi} \hg) L(0,\bar{\chi}),\mathcal{O})
= \mu_{\chi}(X_{\infty}^{-}) \textrm{ in } K_{0}(\mathcal{O}, \C).
\end{equation}
Recall that the top row of (D2)$^{-}$ is an exact sequence of f.g.\ $R$-modules
\begin{equation}\label{eqn:top-row-of-D2-minus}
0 \longrightarrow \mu_{K}^{-} \longrightarrow A^{-}
\longrightarrow \tilde{B}^{-} \longrightarrow (\nabla/\delta(C))^{-} \longrightarrow 0.
\end{equation}
This gives the commutative diagram
\[
\xymatrix@1@!0@=36pt {
0 \ar@{->}[rr] & & \mu[\chi]^{G} \ar@{->}[rr] & & A[\chi]^{G} \ar@{->}[rr] & &
\tilde{B}[\chi]^{G} \\
& & & &  A[\chi]_{G} \ar@{->}[u]^{\simeq} \ar@{->}[rr] & & \tilde{B}[\chi]_{G} \ar@{->}[u]^{\simeq} \ar@{->}[rr] & &
(\nabla/\delta(C))[\chi]_{G} \ar@{->}[rr] & & 0,
}
\]
where the vertical maps induced by $\Norm_{G}$ are isomorphisms since
$A^{-}$ and $\tilde{B}^{-}$ are $R$-c.t. Hence we have an exact sequence
\begin{equation}\label{eqn:combined-seq-theta-chi}
0 \longrightarrow \mu[\chi]^{G} \longrightarrow A[\chi]_{G}
\longrightarrow \tilde{B}[\chi]_{G} \longrightarrow (\nabla/\delta(C))[\chi]_{G}
\longrightarrow 0.
\end{equation}

Now recall that $X_{\infty}^{-}$ is the refined Euler characteristic of
\eqref{eqn:top-row-of-D2-minus}.
Since $\mu_{K}^{-}$ and $(\nabla/\delta(C))^{-}$ are finite, the only possible metrisation is
$\varphi_{\mathrm{triv}}$ (i.e. $0 \stackrel{\sim}\longrightarrow 0$) and so
\eqref{eqn:mu-chi-eq} becomes
\begin{equation}\label{eqn:equal-in-K(0,C)}
(\mathcal{O},  \nr_{e_{\chi}E[G]}(e_{\chi} \hg)L(0,\bar{\chi}), \mathcal{O})
=  \euler(A[\chi]_{G} \rightarrow \tilde{B}[\chi]_{G}, \varphi_{\mathrm{triv}})
\textrm{ in } K_{0}(\mathcal{O}, \C).
\end{equation}
By Lemma \ref{lemma:euler-to-fit}, \eqref{eqn:combined-seq-theta-chi} and \eqref{eqn:equal-in-K(0,C)} give an equality of $\mathcal{O}$-lattices of the form
\[
\nr_{e_{\chi}E[G]}(e_{\chi} \hg) L(0, \bar{\chi}) \mathcal{O}
= \fit((\mu_{K}[\chi])^{G})^{-1} \fit( (\nabla/\delta(C))[\chi]_{G}) .
\]
Combining this equality with Proposition \ref{prop:fit-comp} gives
\[
L(0, \bar{\chi}) U_{\chi}  \prod_{i=0}^{i=2} \fit(H^{i}(G, \mu_{K}[\chi]))^{(-1)^{i}}
\subseteq \fit(\cl_{K}^{-}[\chi]^{G}).
\]
Recalling that $h(\mu_{K},\chi) := \prod_{i=0}^{i=2} \fit(H^{i}(G, \mu_{K}[\chi]))^{(-1)^{i}}$,
we then obtain
\[
L(0, \bar{\chi}) U_{\chi} h(\mu_{K},\chi)
\subseteq \fit(\cl_{K}^{-}[\chi]^{G})
\subseteq \Ann_{\mathcal{O}}(\cl_{K}^{-}[\chi]^{G}).
\]
Hence for any $x \in U_{\chi} \cdot h(\mu_{K},\chi)$, Lemma \ref{useful} implies that
\begin{equation}\label{eqn:L-annihilates}
x L(0,\bar{\chi}) \cdot \pr_{\chi} \in
\Ann_{\mathcal{O}[G]^-}(\mathcal{O} \otimes_{\Z} \cl^{-}_{K}) \subseteq
\Ann_{\mathcal{O}[G]}(\mathcal{O} \otimes_{\Z} \cl_{K}).
\end{equation}
The desired result now follows by applying Lemma \ref{useful2}.
\end{proof}

\begin{proof}[Proof of Corollary \ref{cor:weak-brumer}]
Let $\chi$ be a non-trivial irreducible character of $G$ and let $K:=L^{\ker(\chi)}$.
As every inertia subgroup is normal in $G$, every inertia subgroup of $\Gal(K/k)$ is normal.
Choose $E_{\chi}$ such that $d_{\chi}=[E_{\chi}:\Q(\chi)]$ and
let $G \cdot \chi$ denote the orbit of $\chi$ in $\mathrm{Irr}(G)$. 
Then taking into account Remark \ref{rmk:simplifying-thm} and applying Theorem \ref{main-theorem} 
with $x=1$ shows that
\[
\sum_{\omega \in \Gal(E_{\chi}/\Q)} \!\!\! L(0, \bar{\chi}^{\omega})
\cdot \pr_{\chi^{\omega} }
= d_{\chi} \sum_{\psi \in G \cdot \chi} \!\!\! L(0, \bar{\psi})
\cdot \pr_{\psi}
\]
belongs to the centre of $\Z_{(p)}[G]$ and annihilates $\Z_{(p)}  \otimes_{\Z} \cl_L$.
Hence summing over all non-trivial irreducible characters of $G$ gives the desired result in the case
$\mathcal{S} = \mathcal{S}_{\infty}$.

If $\mathcal{S} \supsetneqq \mathcal{S}_{\infty}$ then $L_{\mathcal{S}}(0,\chi)$
is $L(0,\chi)$ multiplied by factors of the form
\[
L_{K_{\Frakp}/k_{\frakp}}(0, \psi)^{-1}
= \lim_{s \to 0} \det_{\C}(1-F_{\frakp} (\mathrm{N} \frakp)^{-s} \mid V_{\psi}^{G_{0,\frakp}}),
\]
each of which is an element of $\mathcal{O}$ (possibly zero). Hence the containment \eqref{eqn:L-annihilates} is still valid when $L(0,\chi)$ is replaced by
$L_{\mathcal{S}}(0,\chi)$, giving the analogous version of Theorem \ref{main-theorem}. 
The desired result then follows by the same argument as above.
\end{proof}

\section{Acknowledgment}

We would like to thank Cornelius Greither for helpful discussions regarding his paper \cite{MR2371374}.

\bibliography{nonabstickbib}
\bibliographystyle{amsalpha}

\end{document}